\newtheorem{theorem}{Theorem}[section]
\newtheorem{proposition}[theorem]{Proposition}
\newtheorem{lemma}[theorem]{Lemma}
\newtheorem{corollary}[theorem]{Corollary}
\newtheorem{fact}[theorem]{Fact}
\theoremstyle{definition}
\newtheorem{definition}[theorem]{Definition}
\theoremstyle{remark}
\newtheorem{remark}[theorem]{Remark}
\numberwithin{equation}{section}
\def\Ind{\setbox0=\hbox{$x$}\kern\wd0\hbox to 0pt{\hss$\mid$\hss} \lower.9\ht0\hbox to 0pt{\hss$\smile$\hss}\kern\wd0} 
\def\Notind{\setbox0=\hbox{$x$}\kern\wd0\hbox to 0pt{\mathchardef \nn=12854\hss$\nn$\kern1.4\wd0\hss}\hbox to 0pt{\hss$\mid$\hss}\lower.9\ht0 \hbox to 0pt{\hss$\smile$\hss}\kern\wd0}
\def \d {\delta}
\def \U {\mathcal U}
\def \DCF {\operatorname{DCF}}
\def \B {\mathcal B}
\title[Differential Galois cohomology and PPV extensions]{Differential Galois cohomology and Parameterized Picard-Vessiot extensions}
\author{Omar Le\'on S\'anchez}
\address{Omar Le\'on S\'anchez\\
University of Manchester\\
Department of Mathematics\\
Oxford Road \\
Manchester, M13 9PL.}
\email{omar.sanchez@manchester.ac.uk}
\author{Anand Pillay}
\address{Anand Pillay\\
University of Notre Dame\\
Department of Mathematics\\
255 Hurley, Notre Dame\\
IN, 46556.}
\email{apillay@nd.edu}
\thanks{Anand Pillay was supported by NSF grants DMS-1665035 and  DMS-1760212, as well as a Kathleen Ollerenshaw Visiting Professorship at the University of Manchester}
\date{\today}
\subjclass[2010]{03C60, 12H05}
\keywords{differential Galois cohomology, D-varieties, PPV extensions}
\begin{document}

\begin{abstract}
Assuming that the differential field $(K,\d)$ is differentially large, in the sense of Le\'on S\'anchez and Tressl \cite{LSTressl2018}, and ``bounded" as a field,  we prove that for any linear differential algebraic group $G$ over $K$, the differential Galois (or constrained) cohomology set $H^1_\d(K,G)$ is finite. This applies, among other things, to \emph{closed ordered differential fields} in the sense of Singer \cite{Singer1978}, and to \emph{closed $p$-adic differential fields} in the sense of Tressl \cite{Tressl}. As an application, we prove a general existence result for parameterized Picard-Vessiot extensions within certain families of fields; if $(K,\d_x,\d_t)$ is a field with two commuting derivations, and $\d_x Z = AZ$ is a parameterized linear differential equation over $K$, and $(K^{\d_x},\d_t)$ is ``differentially large" and $K^{\delta_x}$ is bounded, and $(K^{\d_x}, \d_t)$ is existentially closed in $(K,\d_t)$, then there is a PPV extension $(L,\d_x,\d_t)$ of $K$ for the equation such that $(K^{\delta_x},\d_t)$ is existentially closed in $(L,\d_t)$. For instance, it follows that if the $\d_x$-constants of a formally real differential field $(K,\d_x,\d_t)$ is a \emph{closed ordered $\d_t$-field}, then for any homogeneous linear $\d_x$-equation over $K$ there exists a PPV extension that is formally real. Similar observations apply to $p$-adic fields.
\end{abstract}

\maketitle


\section{Introduction}
The paper has a dual aim; first to prove finiteness theorems for Kolchin's constrained cohomology  (or what we are now calling differential Galois cohomology), over suitable differential fields $(K,\d)$. Secondly to apply these finiteness theorems to the so-called Parameterized Picard-Vessiot (PPV) theory; namely to show the existence of PPV-extensions of differential fields $(K,\d_x, \d_t)$ for linear equations $\d_x Y = AY$ over $K$,  under conditions on the $\d_t$ field of $\d_x$-constants of $K$, and where we may demand more properties of the PPV-extension. 

The first is modelled on the {\em triviality} result from \cite{Pillay2017}, but requires some new ideas. The second is a relatively routine adaptation of  \cite{KP}, but building on and depending on \cite{LSN}, and with a few delicate points.

The constrained cohomology set $H^{1}_{\d}(K,G)$ (for $(K,\d)$ a differential field and $G$ a differential algebraic group over $K$), was defined in \cite{KolchinBook2}. It parameterizes differential algebraic principal homogeneous spaces $X$ for $G$ over $K$, up to isomorphism over $K$ in the appropriate category.

We will work in characteristic $0$. Recall that a field $K$ is called {\em bounded} if it has only finitely many algebraic extensions of any given degree.  A field $K$ is called {\em large} if whenever $V$ is a $K$-irreducible variety over $K$ with a smooth $K$-point then $V$ has a Zariski-dense set of $K$-points.  A rather strong notion of {\em differentially large} was given in \cite{LSTressl2018}:  $(K,\d)$ is differentially large if $K$ is large (as a field) and for any differential extension $(L,\d)$ of $(K,\d)$ if $K$ is existentially closed in $L$ as fields, then $(K,\d)$ is existentially closed in $(L,\d)$.  Other weaker notions of differential largeness will be discussed later. Many examples of differentially large fields come from models of Tressl's uniform model companion \cite{Tressl}; namely suppose that $T$ is a model-complete theory of large fields, then $T \cup\{\d$ is a derivation\} has a model companion, and any model will be differentially large.

As mentioned above, the basic template for our finiteness  proof comes from \cite{Pillay2017}, where {\em triviality} of the relevant cohomology sets is shown, assuming that $(K,\d)$ is algebraically closed and Picard-Vessiot closed.  We are weakening the algebraically closed assumption on $K$ to boundedness, and strengthening the PV-closed assumption to ``differentially large", and obtaining finiteness of the relevant cohomology sets. We will discuss possible refinements and  improvements of our  results later.

\section{Preliminaries}

We will here be working mainly  with (ordinary) differential fields $(K,\d)$ of characteristic $0$, although two commuting derivations will appear in Section 5.  $(\U,\d)$ denotes a saturated differentially closed field containing $(K,\d)$, and where needed $(K^{diff},\d)$ denotes a differential closure of $(K,\d)$.

We begin with a quick recollection on the $\tau$-functor, $\nabla$ map, D-varieties, $D$-groups, sharp points etc.   as well as on  differential Galois (or constrained) cohomology. 
(And again in Section 5 on applications to the PPV theory we will need to deal with relative or parameterized versions of these notions, as in \cite{LSN}.)

Many of these notions are due to Buium \cite{Buium} and have been discussed in many papers, such as \cite{Pillay-2remarks}, \cite{Kowalski-Pillay}, and \cite{LSTressl2018} with varying viewpoints. 

If $X \subseteq {\mathbb A}^{n}$ is an affine variety over $K$ whose ideal is generated by polynomials $P_{1}({\bar x}),...., P_r({\bar x})$ then  the prolongation $\tau(X)\subseteq {\mathbb A}^{2n}$ of $X$  is the variety over $K$ defined by  $P_{i}({\bar x}) = 0$ for $i=1,..,r$ as well as   $\sum_{j=1,..,n}( \partial P_{i}/\partial x_{j})u_{j} + P_{i}^{\partial} = 0$  for $i=1,..,r$ where $P_{i}^{\d}$ is the polynomial in ${\bar x}$ obtained from $P_{i}$ by applying $\d$ to its coefficients.  If $X$ is an abstract variety over $K$ then working locally in the affine charts, gives a well-defined  variety $\tau(X)$.
Alternatively one can use Weil descent as in Definition 2.7 of \cite{LSTressl2018}.

When $X$ is defined over the constants of $K$ then $\tau(X)$ coincides with the tangent bundle $T(X)$ of $X$. In general $\tau(X)$ is a {\em torsor} for $T(X)$: each fibre $\tau(X)_{a}$ is a principal homogeneous space for the tangent space $T(X)_{a}$ at $a\in X$  (and uniformly so). 

When $X$ is  affine, and $a$ is a point in $X(\U)$  then $\d(a)$ is the tuple obtained by applying $\d$ to the coordinates of $a$ and one sees readily that $(a,\d(a))\in \tau(X)(\U)$.  We call this map $\nabla_{X}:X\to \tau(X)$. It makes sense for any algebraic variety $X$ over $K$, and is of course a map  only  in the sense of {\em differential algebraic geometry}.

An important observation is that $\tau$ is a (covariant) functor that commutes with products and preserves fields of definition, from which one concludes:

\begin{remark} If $G$ is an algebraic group over $K$, then $\tau(G)$ has a natural structure of algebraic group over $K$, and the natural projection $\pi: \tau(G)\to G$ is a homomorphism. Moreover $\nabla_{G}:G\to \tau(G)$ is a homomorphic section for $\pi$. 
\end{remark}

If $G$ is an algebraic group over $K$, then by an (algebraic) torsor for $G$ over $K$, also called an (algebraic) PHS for $G$ over $K$, we mean an algebraic variety $X$ over $K$ together with a regular (strictly transitive) action of $G(\U)$ on $X(\U)$ over $K$ (in the sense of algebraic geometry). 


\begin{remark} Suppose $X$ is an algebraic torsor for the algebraic group $G$ over $K$. Then
\begin{enumerate}
\item $\tau(X)$ is an algebraic torsor for $\tau(G)$ over $K$.
\item  For $g\in G(\U)$, $x\in X(\U)$, $\nabla_{X}(g\cdot x) = \nabla_{G}(g)\cdot \nabla_{X}(x)$. 
\end{enumerate}
\end{remark}

Now for the notions of {\em algebraic D-variety} and {\em algebraic D-group}.
 An algebraic D-variety over $K$ is an algebraic variety $X$  equipped with a regular (in the sense of algebraic geometry) section $s:X\to \tau(X)$ of the projection $\pi:\tau(X) \to X$, all over $K$.  The data $s$ is equivalent to a lifting of the derivation $\d$ on $K$ to a derivation of the structure sheaf of $X$. 
An algebraic D-group over $K$ is an algebraic D-variety $(G,s)$ over $K$ where $G$ is an algebraic group and $s:G\to\tau(G)$ a homomorphism.  There is a natural category of D-varieties. In particular a D-subvariety of a D-variety $(X,s)$ is a subvariety $Y$ such that $s|Y: Y\to \tau(Y)$. 

\begin{remark}\label{factsDvar} \
 Suppose $(V,s)$ is an algebraic D-variety over $K$. If $W$ is a $K$-irreducible component of $V$, then $W$ is a D-subvariety of  $V$
\end{remark}

Let $(G,s_G)$ be an algebraic D-group over $K$. By a D-torsor for $(G,s_G)$ over $K$ we mean a D-variety $(X,s_X)$ over $K$ such that $X$ is an algebraic torsor for $G$ and for all $g\in G$ and $x\in X$ we have
$$s_X(g\cdot x)=s_G(g)\cdot s_X(x)$$
where the latter action is the (regular) action of $\tau(G)$ on $\tau(X)$  (mentioned earlier).

The analogues of algebraic varieties for differential algebraic geometry are {\em differential algebraic varieties}, which are modelled  locally on solution sets of systems of differential polynomial equations in $\U$ (see \cite{KolchinBook2}, \cite{Buium}). In place of  the Zariski topology we have the Kolchin topology, where on affine $n$-space for example, the Kolchin closed sets are the common zero sets of finite systems of differential polynomial equations. In this paper we will be only concerned with differential algebraic groups and their differential algebraic torsors, which coincide with groups and their torsors in the definable category, i.e. in the structure  $(\U,+,\times,\d)$.  For example, any definable group (over $K$) has the structure of a differential algebraic group over $K$, giving an equivalence of categories. An exhaustive account of all of this appears in \cite{Pillay-foundational}, which also gives an introduction to model theory. We will be concerned mainly with linear differential algebraic groups over $K$, namely  differential algebraic subgroups of some $GL_{n}$, defined over $K$ (or rather with an embedding, definable over $K$ into some $GL(n,\U)$). 

In any case we have the obvious notion of a differential algebraic torsor $X$ for a differential algebraic group, all over $K$.

There is a close connection between ``finite-dimensional"  (finite Morley rank) differential algebraic varieties and algebraic D-varieties via the $\sharp$-points functor.  Let $(X,s)$ be an algebraic D-variety over $K$. Then $(X,s)^{\sharp}$  (also called  $(X,s)^{\d}$) is $\{a\in X(\U): s(a) = \nabla(a)\}$, and is a finite-dimensional differential algebraic variety, and any finite-dimensional differential algebraic variety essentially arises this way (see \cite{Pillay-2remarks}).   For  finite-dimensional differential algebraic groups and their differential algebraic torsors we have a closer relationship.

\begin{remark}\label{equivcat} 
\begin{enumerate}
\item There is an equivalence of categories between algebraic D-groups over $K$ and differential algebraic groups of finite Morley rank over $K$. The functor is given by taking sharp points.
\item More generally, consider the following categories. Let $\mathcal C$ be the category of algebraic D-torsors for algebraic D-groups over $K$ with morphisms between $X^G$ and $Y^G$ being D-morphisms over $K$ that preserve the $G$-actions (i.e., $G$-morphisms). On the other hand, let $\mathcal D$ be the category of differential torsors for differential algebraic groups of finite Morley rank over $K$ with morphisms being differential morphisms over $K$ preserving the action. The $\sharp$-points functor yields an equivalence of categories. 
\end{enumerate}
\end{remark}

For arbitrary, not necessarily finite-dimensional differential algebraic torsors, we have (see \cite{Pillay-foundational}):

\begin{remark} Suppose $X$ is a differential  algebraic torsor of a differential algebraic group $G$, over $K$.
Then there is a definable over $K$ embedding of $(G, X)$ into some $(G_{1},X_{1})$ where $G_{1}$ is an algebraic group over $K$, $X_{1}$ an algebraic torsor for $G_{1}$ over $K$ and the action of $G_{1}$ on $X_{1}$ restricts to the action of $G$ on $X$. 
\end{remark}

Now for (differential) Galois cohomology.  The usual Galois cohomology (pointed) set $H^{1}(K,G)$ classifies algebraic torsors $X$ for $G$ over $K$ up to isomorphism over $K$ (as $G$-torsors). Here  $K$ is a field and $G$ is an algebraic group over $K$.   The generalization to differential algebraic groups over $K$ was carried out by Kolchin (in Chapter VII of \cite{KolchinBook2}).  $G$ is now a differential algebraic group over a differential field $K$ and the set of differential algebraic torsors for $G$ over $K$, up to differential algebraic over $K$ isomorphism can be described in terms of certain 1-cocycles from $Aut(K^{diff}/K)$ to $G(K^{diff})$. Hence the expression ``constrained cohomology",  as Kolchin referred to $K^{diff}$ as the constrained 
closure of $K$.  The set-up was generalized  in \cite{Pillay-Galois} to a model-theoretic setting.  In any case $H^{1}_{\d}(K,G)$ classifies the differential algebraic PHS's over $K$ for $G$.

As a matter of notation the expression ``differential Galois cohomology" was used in Kolchin's first book \cite{KolchinBook1} for a distinct, but related, notion (related to the Galois theory of strongly normal extensions).  The various cohomology theories of Kolchin and their interrelations, are discussed in \cite{Chatzidakis-Pillay}. Anyway we hope that our  identification of   ``constrained cohomology" and ``differential Galois cohomology",  is acceptable. 

The following lemma yields, in particular, an \emph{inductive principle} in cohomology sequences (similar to Lemma 2.1 of \cite{Pillay2017}) which is one of the key points in the proof of Theorem~\ref{mainfinite} below. The proof of the lemma makes use of results of Kolchin appearing in Chapter VII of \cite{KolchinBook2} and also arguments from \cite{Pillay-Galois} and \cite{PR}. We recall that given a differential algebraic group $N$ over $K$, by a $K$-form of $N$ one means a differential algebraic group over $K$ which is definably isomorphic to $N$ over $K^{diff}$. One way to build $K$-forms is as follows: Suppose $N$ is differential algebraic subgroup of a differential algebraic group $G$, both over $K$. Let $X$ be a differential algebraic torsor for $G$ over $K$, by \cite{Pillay-Galois} $X$ can be identified with a (constrained) definable 1-cocycle $\mu$ over $K$ in $G$ (so the cohomology class of $\mu$ is an element of $H^{1}_\d (K,G)$). Consider the map $\Phi$ from $Aut(K^{diff}/K)$ to the group of $K^{diff}$-definable group automorphisms of $N$ given by taking any $\sigma$ to conjugation of $N$ by $\mu(\sigma)$. This map $\Phi$ is also a definable cocycle in the sense of \S4 of \cite{Pillay-Galois} and furthermore, by an adaptation of the arguments in that section, $\Phi$ gives rise to a $K$-form of $N$ that we denote by $N_\mu$. We note that if $\mu$ is a definable 1-coboundary over $K$ in $G$ (namely $\mu$ is cohomologous to the trivial cocycle) then $N_\mu$ is isomorphic to $N$ over $K$. 
 
\begin{lemma} \label{inductive}
\begin{enumerate}
\item Let $G$ be an algebraic group over the differential field $K$. Then $H^{1}(K,G) = H^{1}_{\d}(K,G)$.
\item Let  $1\to N \to G\to H\to 1$ be a normal short exact sequence of differential algebraic groups over $K$.
Then the sequence  
$$H^{1}_{\d}(K,N) \to H^{1}_{\d}(K,G) \to H^{1}_{\d}(K,H)$$ 
of pointed sets is exact. Furthermore, if $H^{1}_{\d}(K,H)$ is finite and for every $\mu\in H^{1}_{\d}(K,G)$ the cohomology $H^{1}_{\d}(K,N_\mu)$ is also finite, then  $H^{1}_{\d}(K,G)$ is finite as well. 
\end{enumerate}
\end{lemma}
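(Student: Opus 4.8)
The plan is to handle the two parts in turn, deducing the finiteness statement in part (2) from the standard non-abelian cohomology formalism once its constrained analogue (from Chapter~VII of \cite{KolchinBook2} and \cite{Pillay-Galois}) is invoked, exactly in the spirit of Lemma~2.1 of \cite{Pillay2017}.

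For part (1), the plan is to show that when $G$ is an algebraic group the differential algebraic torsors for $G$ over $K$ are precisely the algebraic torsors for $G$ over $K$, and that the two notions of isomorphism over $K$ coincide. Let $X$ be a differential algebraic torsor for $G$ over $K$ and fix a point $x_0\in X(K^{diff})$ (such a point exists since torsors become trivial over the constrained closure). By the embedding remark above, $(G,X)$ embeds over $K$ into some $(G_1,X_1)$ with $G_1\supseteq G$ an algebraic group and $X_1$ an algebraic torsor for $G_1$; since $G$ is then an algebraic subgroup of $G_1$ acting algebraically on $X_1$, the orbit $G\cdot x_0$ is an algebraic torsor $Y$ for $G$ over $K$ with $X=Y(\U)$. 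Thus differential and algebraic torsors for $G$ agree, and on the level of cocycles a constrained $1$-cocycle into $G$ may be taken with values in $G(K^{\mathrm{alg}})$ and to factor through $\mathrm{Gal}(K^{\mathrm{alg}}/K)$, matching the data computing $H^1(K,G)$. This gives the identification $H^1_\d(K,G)=H^1(K,G)$.

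For the exactness in part (2), the plan is to transcribe the classical non-abelian argument to the constrained setting: for $1\to N\to G\to H\to 1$ one checks that the composite $H^1_\d(K,N)\to H^1_\d(K,G)\to H^1_\d(K,H)$ is trivial and that the preimage of the base point under the second map is exactly the image of the first; all the cocycle manipulations involved are available over $K^{diff}$ by \cite{KolchinBook2} and \cite{Pillay-Galois}. For the finiteness statement the plan is then to run the twisting argument. Write $\alpha\colon H^1_\d(K,G)\to H^1_\d(K,H)$ for the second map. Since $H^1_\d(K,H)$ is finite, $\alpha$ has finite image, so it suffices to show every nonempty fiber of $\alpha$ is finite. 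The fiber over the base point equals the image of $H^1_\d(K,N)\to H^1_\d(K,G)$ by exactness, and this is finite because $N_\mu$ is isomorphic to $N$ over $K$ for $\mu$ the trivial class, so $H^1_\d(K,N)$ is finite by hypothesis. For a general $\beta$ in the image of $\alpha$, choose $\mu\in H^1_\d(K,G)$ with $\alpha(\mu)=\beta$ and twist the whole sequence by $\mu$ to obtain $1\to N_\mu\to G_\mu\to H_\mu\to 1$, with $N_\mu$ the $K$-form described just before the statement. The twisting bijection $t_\mu\colon H^1_\d(K,G_\mu)\to H^1_\d(K,G)$ carries the base point to $\mu$ and is compatible with the analogous bijection for $H$, so it identifies $\alpha^{-1}(\beta)$ with the base-point fiber of the twisted map $H^1_\d(K,G_\mu)\to H^1_\d(K,H_\mu)$; by exactness of the twisted sequence this fiber is the image of $H^1_\d(K,N_\mu)\to H^1_\d(K,G_\mu)$, which is finite by hypothesis. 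Hence all fibers are finite and, there being finitely many of them, $H^1_\d(K,G)$ is finite.

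The step I expect to be the main obstacle is not the combinatorics of the exact sequence but making the twisting apparatus rigorous over $K^{diff}$: one must verify that twisting a constrained $1$-cocycle by $\mu$ yields the $K$-form $N_\mu$ (and likewise $G_\mu$, $H_\mu$) with the expected functoriality, that the twisting bijections exist and commute with the maps of the sequence, and that constrainedness and definability over $K$ are preserved throughout. This is precisely where the detailed results of \cite{KolchinBook2}, \cite{Pillay-Galois}, and \cite{PR} enter, and it is the constrained analogue of the inductive principle of Lemma~2.1 in \cite{Pillay2017}.
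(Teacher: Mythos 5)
Your part (2) is essentially the paper's own proof: exactness is quoted from Kolchin (Chapter VII, Theorem 2 of \cite{KolchinBook2}), and your twisting argument is exactly the adaptation of Platonov--Rapinchuk \S 1.3.2/Theorem 6.16 that the paper invokes --- the whole content is the surjection of $H^1_\d(K,N_\mu)$ onto the fibre $\pi^{-1}(\pi(\mu))$ via the twisting bijection --- and you defer the same details (rigorous twisting of constrained cocycles, the construction of the $K$-forms $N_\mu$, $G_\mu$, $H_\mu$ via \cite{Pillay-Galois}) that the paper leaves to the reader. No complaint there.

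Part (1), however, contains a genuine gap. The paper simply cites Kolchin's Theorem 4 (Chapter VII of \cite{KolchinBook2}), whereas you attempt a direct proof, and the key step fails: the embedding of $(G,X)$ into $(G_1,X_1)$ supplied by the paper's Remark 2.5 is only \emph{definable} in $(\U,\d)$, so even though $G$ is abstractly an algebraic group, its image in $G_1$ is in general only a differential algebraic subgroup, not a Zariski closed one, and consequently the orbit $G\cdot x_0$ need not be an algebraic subvariety of $X_1$. Concretely, take $G=\mathbb{G}_a$ embedded in $G_1=\mathbb{G}_a\times\mathbb{G}_a$ by $g\mapsto (g,\d g)$, acting on $X_1=\mathbb{A}^2$ by translation: the orbit of $(0,0)$ is $\{(x,\d x)\}$, which is Kolchin closed but not algebraic. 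So your assertion that ``$X=Y(\U)$ for an algebraic torsor $Y$'' is false as stated; the correct statement is only that $X$ is differentially $G$-isomorphic over $K$ to an algebraic torsor, and producing that isomorphism is precisely the content of Kolchin's Theorem 4 --- as is your unproved claim that any constrained $1$-cocycle in $G$ ``may be taken'' with values in $G(K^{\mathrm{alg}})$ and factoring through $\mathrm{Gal}(K^{\mathrm{alg}}/K)$, which restates the theorem rather than proving it. The remedy is to cite Kolchin as the paper does. (The injectivity direction, by contrast, does admit your orbit-style argument: a $K$-definable $G$-equivariant bijection between two \emph{algebraic} torsors has as graph the diagonal $G$-orbit of $(x_0,f(x_0))$ in $X\times Y$, a locally closed subvariety which is defined over $K$ because the graph is $K$-definable, so such a map is automatically an algebraic isomorphism over $K$.)
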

\begin{proof}
(1) is Theorem 4 from Chapter VII of Kolchin's book \cite{KolchinBook2}.

\vspace{2mm}
\noindent
(2) By Theorem 2 in Chapter VII of Kolchin's book \cite{KolchinBook2}, the short exact sequence of groups gives rise to the exact sequence (of pointed sets) in cohomology. We observe that when the group $G$ is abelian the sequence in cohomology is an exact sequence of groups and the ``furthermore clause'' follows immediately in this case. However, in general ($G$ not necessarily abelian) the sequence is only an exact sequence of pointed sets and the finiteness of $H^{1}_{\d}(K,G)$ in the ``furthermore clause'' needs to be justified~\footnote{We thank an anonymous referee for pointing out this subtle issue}. We remark that the analogue of (2) is well-known for the usual Galois cohomology of fields and algebraic groups; see for instance Theorem 6.16 of Platonov\&Rapinchuk's book \cite{PR}. The proof in the algebraic case adapts to the differential algebraic environment. Here we simply point out where the adaptations are needed and leave details to the interested reader. So we are now assuming that $H^{1}_{\d}(K,H)$ is finite and for every $\mu\in H^{1}_{\d}(K,G)$ the cohomology $H^{1}_{\d}(K,N_\mu)$ is also finite. As remarked above we already have $\pi:H^{1}_{\delta}(K,G) \to H^{1}_{\delta}(K, H)$. Hence the image $\pi(H^{1}_{\delta}(K,G))$ is finite, as it is contained in $H^{1}_{\delta}(K,H)$. So there are $
\mu_{1},\dots,\mu_{r}\in H^{1}_{\delta}(K,G)$ with $\pi(H^{1}_{\delta}(K,G))= \pi(\{\mu_{1},\dots,\mu_{r}\})$. Now the key point, which comes from adapting the arguments in \S1.3.2 of Platonov\&Rapinchuk book \cite{PR}, is that, for each $i=1,\dots,r$, $H^{1}_{\delta}(K,N_{\mu_i})$ maps onto $\pi^{-1}(\pi(\mu_{i}))$. Thus, by our hypothesis, each $\pi^{-1}(\pi(\mu_{i}))$ is finite, and so $H^{1}_{\delta}(K,G)$ is finite as well.

\end{proof}

A basic theorem of Serre \cite{Serrebook} is that if the field $K$ is bounded (finitely many extensions of degree $n$ for any $n$), then $H^{1}(K,G)$ is finite for any linear algebraic group $G$ over $K$. A motivating theme of this paper and future work is to generalize Serre's theorem in suitable ways  to finiteness theorems for the differential Galois cohomology of linear differential algebraic groups.

\smallskip

Finally let us discuss ``differential largeness" more.  As mentioned earlier a field $K$ is called large if any $K$-irreducible variety over $K$ with a smooth $K$-point has a Zariski-dense set of $K$-points. One of the points of largeness of $K$ is that the condition that a variety over $K$ has a dense set of $K$-points becomes first order, in definable families. 
Largeness (of the field of constants of a given differential field) played a role in the strong existence theorems for strongly normal extensions in \cite{KP} and \cite{BCPP}.   Analogous notions of ``differential largeness" of a differential field should give denseness, in the sense of the Kolchin topology,  of the set of $K$-points of a differential algebraic variety $X$ over $K$, under appropriate assumptions. The question is what the assumptions should be. One natural idea is to define the notion of a ``smooth point" on the differential algebraic variety $X$, and take as our assumption that $X$ has a smooth $K$-point. This idea was pursued in \cite{Cousins-thesis}.  Another idea is to take rather a weaker assumption that certain {\em algebraic} varieties attached to $X$ have many $K$-points. This was pursued by the first author and Marcus Tressl in \cite{LSTressl2018} and is the notion of differential largeness used in the current paper. 
A convenient definition from \cite{LSTressl2018}  (also appearing in \cite{Cousins-thesis}) is:

\begin{definition} Let $(K,\d)$ be a differential field. We call $K$ differentially large, if $K$ is large as a field, and for any differential field extension $(L,\d)$ of $(K,\d)$, if $K$ is existentially closed in $L$ as a field, then it is existentially closed in $L$ as a differential field.
\end{definition}

An equivalent condition (see Corollary 6.5 of \cite{LSTressl2018}) is a slight variant on the geometric axioms for $DCF_{0}$:

\begin{fact} $(K,\d)$ is differentially large iff $K$ is large as a field, and whenever $(V,s)$ is an algebraic D-variety over $K$ such that $V$ is $K$-irreducible, and has a smooth $K$-point, then for any nonempty Zariski open subset $U$ of $V$, over $K$, there is a $K$-point $a$ in $U$ such that $s(a) = \nabla_{V}(a)$. 
\end{fact}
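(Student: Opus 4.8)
The plan is to prove the two implications of the biconditional separately, using throughout the observation that a $K$-point $a \in V$ with $s(a)=\nabla_V(a)$ is precisely a $K$-rational point of the sharp-point set $(V,s)^\sharp$, and that the section $s$ is the same data as a lift of $\d$ to a derivation of the function field $K(V)$ extending $\d$ on $K$; I write $\d_s$ for that derivation. I will also use the standard characterization of largeness in the form: if $K$ is large and $V$ is a $K$-irreducible variety over $K$ with a smooth $K$-point, then $K$ is existentially closed in $K(V)$ as a field. (A smooth $K$-point is fixed by the absolute Galois group and lies on a single geometric component, so in characteristic $0$ it forces the $K$-irreducible $V$ to be geometrically irreducible, whence $K(V)/K$ is regular and the characterization applies.)

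For the forward direction, assume $(K,\d)$ is differentially large and let $(V,s)$ and $U$ be as in the statement. First I form the differential field $(L,\d):=(K(V),\d_s)$. Since $K$ is large and $V$ has a smooth $K$-point, $K$ is existentially closed in $L$ as a field, so by differential largeness $K$ is existentially closed in $(L,\d)$ as a differential field. Now the generic point $\eta\in V(L)$ of $V$ over $K$ satisfies $s(\eta)=\nabla_V(\eta)$ by the very definition of $\d_s$, and $\eta$ lies in $U$ because $U$ is a nonempty Zariski-open subset. The condition ``$x\in V$, $x\in U$, and $s(x)=\nabla_V(x)$'' is a quantifier-free differential condition over $K$: membership in $V$ is algebraic, membership in $U$ is an inequation, and $s(x)=\nabla_V(x)$ unwinds to $\d x = s_2(x)$, where $s_2$ is the $\tau$-component of $s$, a differential-polynomial condition over $K$. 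As $\eta$ witnesses the associated existential sentence in $(L,\d)$ and $K$ is existentially closed in $(L,\d)$, the sentence holds in $(K,\d)$, yielding the desired $a\in U(K)$ with $s(a)=\nabla_V(a)$.

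For the reverse direction, assume $K$ is large and the displayed D-variety condition holds; I must show $(K,\d)$ is differentially large. Given a differential extension $(L,\d)$ in which $K$ is existentially closed as a field, it suffices to witness in $K$ an arbitrary existential differential formula over $K$ that has a solution $\bar c\in L$. The plan is to pass through a first-order geometric criterion (in the style of the Pierce--Pillay geometric axiomatization of $\DCF_0$): setting $\bar b=(\bar c,\d \bar c,\dots,\d^n\bar c)$ for $n$ the order of the system, the higher-order data is coded into a single prolongation, so the system becomes the requirement of finding a $K$-point $a$ of the $K$-locus $V$ of $\bar b$ with $\nabla_V(a)$ lying in a prescribed $K$-irreducible subvariety $W\subseteq\tau(V)$ that dominates $V$ and meets a nonempty open set $O$ (here $W$ is the $K$-locus of $\nabla_V(\bar b)$). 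The remaining---and main---step reduces this general $W$-version to the section version that is actually assumed: one chooses a rational section of the projection $\tau(W)\to W$, which exists because its fibres are torsors under the tangent bundle, a vector group, and $H^1$ of a field with coefficients in a vector group vanishes. This rational section defines a D-structure $\tilde s$ over $K$ on a nonempty open subset $\tilde V$ of $W$ for which $\d$ applied to the generic point recovers the prolongation direction; applying the assumed condition to $(\tilde V,\tilde s)$ then produces a $K$-point whose image in $V$ is the sought solution. The smooth-point and open-set hypotheses for $(\tilde V,\tilde s)$ are obtained by transferring existence of a smooth $L$-point of $W$ down to $K$ via existential closure in the field sense, and then using largeness again to place a smooth $K$-point inside the open subset $\tilde V$.

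The step I expect to be the main obstacle is this last reduction from the general dominating-$W$ geometric axiom to the section-only hypothesis, together with the verification that differential existential closure can indeed be tested on the first-prolongation criterion even when $\bar c$ generates an infinite-transcendence-degree (hence non-finite-dimensional) differential extension. The prolongation coding takes care of the order, and the vanishing of $H^1$ for vector groups over $K(W)$ effects the passage to a genuine section, but both require care in keeping all data defined over $K$ and in checking that the smooth-point and openness hypotheses survive each reduction. This is Corollary 6.5 of \cite{LSTressl2018}.
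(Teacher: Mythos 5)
The paper itself contains no proof of this Fact: it is imported verbatim, with a citation to Corollary 6.5 of \cite{LSTressl2018}, so your attempt has to be measured against that source rather than against anything in the present text. Your forward direction is correct and is the natural argument: form $(K(V),\d_s)$ from the section, use the paper's definition of largeness (smooth $K$-point on a $K$-irreducible $V$ gives Zariski-dense $K$-points, hence $K$ existentially closed in $K(V)$ as a field; your geometric-irreducibility parenthetical is right but not even needed for this), then transfer the quantifier-free differential condition witnessed by the generic point. Your worry about infinite transcendence degree in the converse is a non-issue, exactly as you suspect: the witness is a finite tuple satisfying finitely many differential conditions of bounded order, and the prolongation coding absorbs the order.

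The converse has the right skeleton (Pierce--Pillay-style coding, then reduction of the dominating-$W$ criterion to the section form), but the section-existence step would fail as literally written. Applying $H^1(K(W),\mathbb{G}_a^m)=0$ to the fibres of $\tau(W)\to W$ produces an \emph{arbitrary} rational section, and sharp $K$-points of the resulting D-structure are points $(a,u)\in W$ with no relation between $\d a$ and $u$, so $\nabla_V(a)$ need not lie in $W$ and the reduction collapses. Writing points of $\tau(W)$ over $(\bar x,\bar u)\in W\subseteq\tau(V)$ as $(\bar x,\bar u,\bar p,\bar q)$, you need a section landing in the locus $\{\bar p=\bar u\}$ -- which is the property you state in words but your cohomological argument does not deliver. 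The repair uses exactly your tools: over the generic point $(\bar b,\d\bar b)$ of $W$ the constrained fibre $\{\bar p=\bar u\}$ is a principal homogeneous space under the vector group $\ker\bigl(d\pi:T(W)\to T(V)\bigr)$, and it is nonempty because $\nabla_W(\bar b,\d\bar b)$, computed in $(L,\d)$, lies in it and $L\supseteq K(W)$; the same $H^1$ vanishing then yields a $K(W)$-point, equivalently one extends the derivation $K(V)\to K(W)$ determined by $\bar b\mapsto\d\bar b$ to all of $K(W)$, which is automatic in characteristic zero. With the constrained section, a sharp $K$-point $(a,u)$ satisfies $\d a=u$, so $\nabla_V(a)=(a,u)\in W$ and the relations $\bar u_i=\bar x_{i+1}$ holding on $W$ recover the differential system; also note that your smooth-point transfer into $\tilde V$ needs only field-theoretic existential closedness (largeness is not required at that step). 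With this one repair your proof is complete, and it is a genuinely more self-contained route than the one in \cite{LSTressl2018}, which reaches this equivalence through the twisted Taylor morphism and Tressl's uniform-companion machinery: your version buys a direct geometric/model-theoretic argument, theirs buys uniformity and a longer list of equivalent characterizations.
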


\begin{remark}
\
\begin{enumerate}
\item  The conclusion on the right hand side of Fact 2.8  says precisely that $(V,s)^{\sharp}(K)$ is Kolchin dense in $(V,s)^{\sharp}$.

\item  Assuming that $T$ is a model-complete theory of large fields  (in the ring language), then $T\cup\{\d$ is a derivation\} has a model companion which is axiomatized by $T \cup \{\d$ is a derivation\} $\cup$ the axioms scheme given by the right hand side of Fact 2.8. This is Tressl's ``Uniform Model Companion" \cite{Tressl},  and provides a large source of differentially large, but not differentially closed fields, such as closed ordered differential fields, and closed $p$-adic differential fields. 

\item As pointed out in \cite{LSTressl2018}, algebraically closed and differentially large implies differentially closed. 

\end{enumerate}

\end{remark}

As in \cite{Pillay2017} we will make use of model-theoretic dimensions, Morley rank and $U$-rank, in the context of $DCF_{0}$.  (And as mentioned in \cite{Pillay2017} our arguments could be replaced by ``purely differential algebraic" arguments using Cassidy and Singer \cite{Cassidy-Singer-Holder}.)   We refer to \cite{Poizat}  for facts, background, and references about definable groups and dimensions in stable theories.   We have already mentioned ``finite rank" or ``finite-dimensional" differential algebraic groups a few times.  Finite rank can be taken to be finite Morley rank or finite $U$-rank and coincides with finite-dimensional in the sense of \cite{Buium} or ``differential type $0$" in the sense of \cite{Cassidy-Singer-Holder}. 
 In $DCF_{0}$,  the Morley rank of $G$ coincides with the $U$-rank of $G$ and is of the form $\omega\cdot m + k$, for $m, k$ nonnegative integers.  $G$ is said to be $1$-connected if it has no proper, nontrivial,  normal definable subgroup $N$ such that $G/N$ has finite rank (this coincides with the notion of strongly connected for differential algebraic groups \cite{Cassidy-Singer-Holder}).  From \cite[Corollary 6.3 and Theorem 6.7]{Poizat}$, G$ being $1$-connected is equivalent to $U(G) = \omega\cdot m$  (some $m\geq 0$) and $G$ being connected (no definable subgroup of finite index).  For any $G$, there is a maximal $1$-connected definable subgroup $N$ of $G$ (necessarily normal) such that $G/N$ has finite rank.  $N$ is called the $1$-connected component of $G$. 

In the bulk of this paper we are concerned with linear differential groups, that is differential algebraic (equivalently definable in $(\U,\d)$) subgroups of some $GL(n,\U)$. We will  be using repeatedly the fact \cite{Cassidy} that if $G$ is a linear differential algebraic group over $K$ and $N$ is a normal definable subgroup of $G$ defined over $K$, then $G/N$ is also linear over $K$, specifically $G/N$ is definably embeddable (over $K$) in some $GL(n,\U)$. 

Let us remark briefly that an arbitrary differential algebraic group $G$ defined over $K$, definably over $K$ embeds in an algebraic group $H$. And we conclude from this that $G$ has a normal definable subgroup $N$ such that $N$ is linear and $G/N$ embeds in an abelian variety.  (See \cite{Pillay-foundational}).

\vspace{5mm}
\noindent
A crucial technical result will be proved in  Section 3.
In Section 4 the finiteness theorem is proved (Theorem 4.1). In Section 5 we state and discuss the PPV existence theorem, and sketch the proof.

\section{The D-variety structure on $G$-maps}\label{modeltheory}

From now on we work inside a large saturated model $(\U,\d)\models \DCF_0$. Fix a differential field $(K,\d)$ of characteristic zero. Let $(G,s_G)$ be an algebraic D-group over $K$, and let $(X,s_X)$ and $(Y,s_Y)$ be $D$-torsors for $G$ also over $K$. 

We set
$$\mathcal B(G;X\times Y)=\{f:X\to Y: f \text{ is a $G$-isomorphism}\}.$$
When the context is clear we simply write $\B$ in place of $\B(G;X\times Y)$. Recall that a $G$-isomorphism $f$ is an isomorphism between $X$ and $Y$ (as algebraic varieties) such that $f(g\cdot x)=g\cdot f(x)$ for all $g\in G$, $x\in X$ and $y\in Y$; i.e., $f$ preserves the $G$-action. In this section we show that $\mathcal B$ has naturally the structure of a $D$-variety over $K$, and that its sharp $K$-points correspond to $G$-isomorphisms defined over $K$ that are $D$-morphisms.

First note that a $G$-isomorphism $f:X\to Y$ is completely determined by what it does to a single point of $X$; that is, fixing $a\in X$ once we know $f(a)$ then $f(x)$ must equal $g\cdot f(a)$ where $g\in G$ is such that $x=g\cdot a$. Thus, to any pair $(a,b)\in X\times Y$ we can associate a $G$-isomorphism $f:X\to Y$ given by 
$$f(x)=g\cdot b$$
where $g\in G$ is the unique element of $G$ with $x=g\cdot a$. Also, note that the graph of such an $f$ is simply given by the orbit of $(a,b)$ under the natural action of $G$ on $X\times Y$; namely, $g\cdot (x,y)=(g\cdot x,g\cdot y)$. In other words, each orbit determines uniquely a $G$-isomorphism and distinct orbits yield distinct maps. We thus have that the set $\mathcal B$ is in bijection, and we identify it, with the algebraic variety
$$(X\times Y)/G$$
which is defined over $K$.

Moreover, the canonical $D$-structure on $X\times Y$, namely 
$$s_{X\times Y}:=(s_x,s_Y):X\times Y \to \tau (X\times Y),$$ 
induces a $D$-structure on $\mathcal B$. Indeed, given $f\in \mathcal B$, let $(a,b)\in X\times Y$ be any point such that $\pi(a,b)=f$ where $\pi$ is the canonical (surjective) morphism $X\times Y\to \mathcal B$. Note that this is equivalent to $b=f(a)$. Let 
$$s_\B(f):=\tau \pi(s_{X\times Y}(a,b))\in \tau \B$$
and one just needs to show that this map is independent of the choice of $(a,b)$. So let $(a',b')\in X\times Y$ be in the fibre of $\pi$ above $f$. Note that $\tau \B$ is given as 
$$\tau(X\times Y)/\tau G$$
where the action of $\tau G$ on $\tau(X\times Y)=\tau(X)\times \tau(Y)$ is coordinate-wise. Thus, all we need to show is that there is $u\in \tau G$ such that 
$$s_{X\times Y}(a,b)=u\cdot s_{X\times Y}(a',b').$$
Since $(a,b)$ and $(a',b')$ are in the same $\pi$-fibre, there is $g\in G$ such that $(a,b)=g\cdot (a',b')$. We then have
\begin{align*}
s_{X\times Y}(a,b) & =  (s_X(g\cdot a'),s_Y(g\cdot b'))  \\
& =  (s_G(g)\cdot s_X(a'),s_G(g)\cdot s_Y(b')) \\
& =  s_G(g)\cdot s_{X\times Y}(a',b'). \\
\end{align*}
This shows that $s_\B:\B\to \tau \B$  as defined above is well-defined.
Note that we actually showed  that there is a (necessarily unique) map $s_\B$ that makes the following diagram commute.
$$\xymatrix{
\tau (X\times Y) \ar[rr]^{\tau \pi}&&\tau \B\\
X\times Y \ar[u]^{s_{X\times Y}}\ar[rr]^{\pi}&&\B \ar[u]_{s_\B}
}$$

\begin{remark}\label{smooth}
Note that, as an algebraic variety, $\B$ becomes isomorphic to $Y$ after naming a point in $X$. Indeed, fixing $a\in X$, the morphism that assigns $f\in \B$ to $f(a)\in Y$ is an isomorphism. In particular, each $K$-irreducible component of $\B$ is a smooth algebraic variety. We will use this latter fact in Corollary \ref{tosharp} below.
\end{remark}

So far we have shown that the algebraic variety $\B$ (defined over $K$) has a $D$-variety structure. We now prove further properties of this structure. 

\begin{proposition}\label{Bproperties}
For $(\B,s_\B)$ as above, we have
\begin{itemize}
\item [(i)] $K$-points of $\B$ correspond to $G$-isomorphisms defined over $K$, and
\item [(ii)] Sharp points of $(\B,s_\B)$ correspond to $G$-isomorphisms that are $D$-morphisms.
\end{itemize} 
\end{proposition}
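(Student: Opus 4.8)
The plan is to prove both parts by exploiting the presentation $\B=(X\times Y)/G$ together with the $K$-rational projection $\pi\colon X\times Y\to\B$ and the fact, built into the construction of $s_\B$, that $\pi$ is a $D$-morphism (this is exactly the commuting square just established). For part (i), I would use that a $G$-isomorphism $f$ is the same datum as its graph $\Gamma_f$, which is precisely the $G$-orbit $\pi^{-1}(f)\subseteq X\times Y$. First I invoke the standard fact that a morphism of $K$-varieties and its graph have the same field of definition, so $f$ is defined over $K$ iff $\Gamma_f$ is. Then, since $\pi$ is defined over $K$, the fibre $\pi^{-1}(f)$ over a point $f\in\B(K)$ is defined over $K$; conversely, if $\Gamma_f=\pi^{-1}(f)$ is defined over $K$, then its image under the $K$-morphism $\pi$, namely the single point $f$, is fixed by every automorphism of $\U$ over $K$, whence $f\in\B(K)$ (we are in characteristic $0$, so the fixed field is $K$). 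Chaining these equivalences gives (i).

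For part (ii) the first key step is to reduce ``$f$ is a $D$-morphism'' to a condition at a single point. Because $f$ is a $G$-isomorphism, because the $D$-structures are $G$-equivariant (i.e.\ $s_X(g\cdot x)=s_G(g)\cdot s_X(x)$ and likewise for $Y$), and because $\tau f$ is $\tau G$-equivariant (apply the functor $\tau$, which commutes with products, to the equivariance of $f$), the equation $\tau f(s_X(x))=s_Y(f(x))$ holds for all $x$ as soon as it holds for one $x=a$: writing a general point as $g\cdot a$ and using equivariance of both sides reduces the equation at $g\cdot a$ to the one at $a$. Next I translate the sharp condition into the same one-point language. Fixing a representative $(a,b)$ of $f$ (so $b=f(a)$ and $\pi(a,b)=f$), the construction of $s_\B$ gives $s_\B(f)=\tau\pi(s_{X\times Y}(a,b))$, while naturality of $\nabla$ under the $K$-morphism $\pi$ gives $\nabla_\B(f)=\tau\pi(\nabla_{X\times Y}(a,b))$. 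Since the fibres of $\tau\pi$ are the $\tau G$-orbits on $\tau X\times\tau Y$, the point $f$ is sharp iff there is $u\in\tau G$ with
\[
(s_X(a),s_Y(b))=u\cdot(\nabla_X(a),\nabla_Y(b)).
\]
As $\tau X$ is a torsor for $\tau G$, the action of $\tau G$ on $\tau X$ is simply transitive, so the first coordinate forces $u$ to be the unique element with $s_X(a)=u\cdot\nabla_X(a)$, and sharpness of $f$ becomes the single further equality $s_Y(b)=u\cdot\nabla_Y(b)$ for this same $u$.

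It then remains to match the two one-point conditions. Using $\tau G$-equivariance of $\tau f$ and the naturality identity $\tau f\circ\nabla_X=\nabla_Y\circ f$, I compute
\[
\tau f(s_X(a))=\tau f(u\cdot\nabla_X(a))=u\cdot\tau f(\nabla_X(a))=u\cdot\nabla_Y(f(a))=u\cdot\nabla_Y(b).
\]
Hence the one-point $D$-morphism condition $\tau f(s_X(a))=s_Y(f(a))=s_Y(b)$ holds exactly when $s_Y(b)=u\cdot\nabla_Y(b)$, i.e.\ exactly when $f$ is sharp. Reading this against the one-point characterization of being a $D$-morphism from the previous paragraph yields (ii) in both directions.

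The main obstacle I anticipate is the equivariance bookkeeping in part (ii): one must verify that $\tau f$ is genuinely $\tau G$-equivariant and that the single element $u$ extracted from the first coordinate is \emph{forced} by simple transitivity of the $\tau G$-action on $\tau X$, so that the second-coordinate equality coincides with the $D$-morphism condition rather than being merely implied by it. A secondary check is that $\nabla$ and $\tau f$ are applied to a representative $(a,b)$ with coordinates in a differential subfield of $\U$, so that the naturality of $\nabla$ and the defining identity $s_\B(f)=\tau\pi(s_{X\times Y}(a,b))$ hold functorially; this is automatic since we work throughout inside $(\U,\d)$.
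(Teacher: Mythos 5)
Your proposal is correct, and part (i) coincides with the paper's own one-line argument: identify $\pi^{-1}(f)$ with the graph of $f$ and use that $\pi$ is a morphism over $K$. Part (ii), however, takes a genuinely different route from the paper's. The paper argues via preservation of sharp points: it fixes a sharp point $a$ of $(X,s_X)$, notes that sharpness of $f$ yields a witness $u\in\tau G$ with $s_{X\times Y}(a,f(a))=u\cdot\nabla_{X\times Y}(a,f(a))$, forces $u=\nabla_G(e)$ from the first coordinate, concludes $f(a)\in Y^\sharp$ from the second, and then relies (implicitly) on the fact that a morphism carrying the set $X^\sharp$ into $Y^\sharp$ is a $D$-morphism --- a step that rests on the existence and Zariski-density of sharp points inside the differentially closed $\U$. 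You avoid sharp points of $X$ altogether: you localize the identity $\tau f\circ s_X=s_Y\circ f$ at a single arbitrary point using $G$-equivariance of $f$, $s_X$, $s_Y$ and the $\tau G$-equivariance of $\tau f$ (obtained, as you say, by applying the product-preserving functor $\tau$ to the equivariance of $f$; transitivity of the $G$-action on $X$ then globalizes the one-point condition), and you match that one-point condition with sharpness of $f$ through the element $u\in\tau G$ uniquely pinned down by strict transitivity of $\tau G$ on $\tau X$, via the naturality identity $\tau f\circ\nabla_X=\nabla_Y\circ f$. This gives a single clean biconditional at an arbitrary representative $(a,b)$ and needs no appeal to $X^\sharp\neq\emptyset$ or to density of sharp points, so it is somewhat more self-contained; the paper's version is shorter and has the advantage of directly exhibiting the behaviour $X^\sharp\to Y^\sharp$ of the sharp-points functor, which is exactly what gets reused in Corollary~\ref{tosharp}. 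The two verification points you flag (equivariance of $\tau f$, and the forcing of $u$ by the first coordinate) are precisely the right ones, and both go through.
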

\begin{proof}
(i) One just has to note that $f\in \B$ is a $K$-point of $\B$ if and only if the preimage $\pi^{-1}(f)$ is defined over $K$ (this uses that $\pi$ is a morphism over $K$). Note that this preimage is precisely the graph of $f$. 

(ii) Suppose $f$ is a sharp point of $(\B,s_\B)$. To prove that $f$ is a D-morphism, it suffices to show that if $a$ is a sharp point of $(X,s_X)$ then $f(a)$ is a sharp point of $(Y,s_Y)$. We have that $s_\B(f)=\nabla_B(f)$. In other words, there is $u\in \tau G$ such that 
$$s_{X\times Y}(a,f(a))=u\cdot \nabla_{X\times Y}(a,f(a)).$$
This means that $s_X(a)=u\cdot \nabla_X(a)$ and $s_Y(f(a))=u\cdot \nabla_Y(f(a)).$ As $a$ is a sharp point of $(X,s_X)$, then $u=\nabla_G(e)$ the identity of $\tau G$. Hence, $s_Y(f(a))=\nabla_X(f(a))$; i.e., $f(a)$ is a sharp point of $(Y,s_Y)$.

Now assume that $f\in \B$ is a $D$-morphism. We argue that it must be a sharp point of $(\B,s_\B)$. Let $(a,f(a))\in X\times Y$ be such that $a$ is sharp point of $(X,s_X)$. Then, as $f$ is a D-morphism, $(a,f(a))$ is a sharp point of $(X\times Y, s_{X\times Y})$. It follows, by definition of $s_\B$, that $s_\B(f)=\nabla_\B(f)$, as desired.
\end{proof}



The following is an important consequence of Proposition ~\ref{Bproperties}.

\begin{corollary}\label{tosharp}
Assume $(K,\d)$ is differentially large. With $(G,s_{G})$, $(X,s_{X})$ and $(Y,s_{Y})$ as above, if there is a $G$-isomorphism $f:X\to Y$ defined over $K$, then there is a differential $G^\sharp$-isomorphism $g:X^\sharp\to Y^\sharp$ defined over $K$. 
\end{corollary}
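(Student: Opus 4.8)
The plan is to produce a \emph{sharp} $K$-point of the $D$-variety $(\B,s_\B)$ constructed above. By Proposition~\ref{Bproperties} such a point is simultaneously a $G$-isomorphism defined over $K$ and a $D$-morphism, and restricting it to sharp points will yield the desired differential $G^\sharp$-isomorphism. The given $f$ will be used only to guarantee that $\B$ has a $K$-point at all.

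First I would unwind the hypothesis. By Proposition~\ref{Bproperties}(i), the given $G$-isomorphism $f:X\to Y$ defined over $K$ is precisely a $K$-point of $\B$; in particular $\B(K)\neq\emptyset$. Let $\B_0$ be a $K$-irreducible component of $\B$ containing this $K$-point. By Remark~\ref{smooth}, $\B_0$ is a smooth algebraic variety, so the $K$-point is a smooth $K$-point of $\B_0$; and by Remark~\ref{factsDvar}, $\B_0$ is a $D$-subvariety of $(\B,s_\B)$. Hence $(\B_0,s_\B|_{\B_0})$ is an algebraic $D$-variety over $K$ which is $K$-irreducible and has a smooth $K$-point.

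Next I would invoke differential largeness. Applying the geometric criterion of Fact~2.8 to $(\B_0,s_\B|_{\B_0})$ (with $U=\B_0$), there is a $K$-point $h$ of $\B_0$ with $s_\B(h)=\nabla_\B(h)$; that is, $h$ is a sharp $K$-point of $(\B,s_\B)$. By Proposition~\ref{Bproperties}(i) and (ii), $h$ is a $G$-isomorphism $X\to Y$ defined over $K$ that is moreover a $D$-morphism, hence a morphism in the category $\mathcal C$ of Remark~\ref{equivcat}(2). Finally, since $(X,s_X)$ and $(Y,s_Y)$ are $D$-torsors for $(G,s_G)$, their sharp points $X^\sharp$ and $Y^\sharp$ are differential torsors for the finite Morley rank differential algebraic group $G^\sharp$, and the equivalence of categories in Remark~\ref{equivcat}(2) sends $h$ to a differential $G^\sharp$-isomorphism $g:=h|_{X^\sharp}:X^\sharp\to Y^\sharp$ defined over $K$, as required.

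The substantive work has already been carried out in this section, namely the construction of the $D$-structure on $\B$ and Proposition~\ref{Bproperties}; given these, the corollary is essentially a direct application of differential largeness. The only points demanding care are that the given $f$ need \emph{not} itself be a $D$-morphism (it serves only to witness $\B(K)\neq\emptyset$, and the sharp point $h$ we produce may well be a different $G$-isomorphism), and that the irreducible component to which Fact~2.8 is applied is genuinely smooth (Remark~\ref{smooth}) and a $D$-subvariety (Remark~\ref{factsDvar}), so that all hypotheses of the geometric form of differential largeness are met.
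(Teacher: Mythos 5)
Your proposal is correct and follows essentially the same route as the paper's proof: use Proposition~\ref{Bproperties}(i) to get a $K$-point of $\B$, pass to a $K$-irreducible component (smooth by Remark~\ref{smooth}, a $D$-subvariety by Remark~\ref{factsDvar}), apply the geometric form of differential largeness to obtain a sharp $K$-point, and translate it back via Proposition~\ref{Bproperties} before restricting to sharp points. Your explicit caveat that the sharp point produced may differ from the given $f$ is a nice clarification that the paper leaves implicit (it even reuses the letter $f$ for the new point).
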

\begin{proof}
Let $(\B,s_\B)$ be the $D$-variety over $K$ of $G$-isomorphisms between $X$ and $Y$ as above. As we are assuming that there is a $G$-isomorphism over $K$, Proposition~\ref{Bproperties}(i) implies that $\B$ has a $K$-point. Let $\B_1$ be the $K$-irreducible component of $\B$ that contains this $K$-point. Recall that by Remark \ref{factsDvar},  $\B_1$ is a D-subvariety of $\B$ and by Remark \ref{smooth} it is also a smooth algebraic variety. As $(K,\d)$ is differentially large, there is a sharp $K$-point of $(\B_1,s_{\B_1})$, and in particular also of $(\B,s_\B)$. Call it $f$.

By Proposition~\ref{Bproperties}, $f$ is a $G$-isomorphism over $K$ that is also a $D$-morphism between $X$ and $Y$. Hence, the restriction of $f$ to $X^\sharp$ yields the desired $G^\sharp$-isomorphism. 
\end{proof}

\section{Finiteness of $H^1_\delta(K,G)$}\label{finiteH1}

We now prove the first main result. 

\begin{theorem}\label{mainfinite}
Suppose $(K,\d)$ is differentially large and also that $K$ is bounded as a field. Then, for any linear differential algebraic group $G$ over $K$, the differential Galois cohomology set $H^1_\delta(K,G)$ is finite.
\end{theorem}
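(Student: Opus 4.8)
The plan is to run a dévissage on $G$, assembling finiteness of $H^1_\d(K,G)$ from the exact sequence and the ``furthermore'' clause of Lemma~\ref{inductive}(2). I would induct on the $U$-rank of $G$, written $\omega\cdot m+k$ and ordered lexicographically in $(m,k)$, reducing everything to two base kinds of building block: \emph{finite-rank} linear differential algebraic groups (the case $m=0$), and honest \emph{algebraic} groups. At a typical step I would select a normal differential algebraic subgroup $N$ of $G$ over $K$ whose quotient $G/N$ is of one of these two kinds and whose rank-complexity is strictly smaller, and invoke Lemma~\ref{inductive}(2): this asks for $H^1_\d(K,G/N)$ finite together with $H^1_\d(K,N_\mu)$ finite for every form $N_\mu$ coming from a class $\mu\in H^1_\d(K,G)$. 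As forms preserve linearity and $U$-rank, each $N_\mu$ is again linear of the same complexity as $N$, hence strictly smaller than $G$, and is covered by the induction hypothesis.

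The base case $m=0$ is where the material of Section~\ref{modeltheory} does the work. By the $\sharp$-equivalence of Remark~\ref{equivcat}, a finite-rank linear differential algebraic group is $(G_0,s)^\sharp$ for an algebraic D-group $(G_0,s)$ over $K$, with $G_0$ a linear algebraic group (a consequence of the structure theory of linear differential algebraic groups, \cite{Cassidy}). By Remark~\ref{equivcat}(2), $H^1_\d(K,(G_0,s)^\sharp)$ is in bijection with the algebraic D-torsors for $(G_0,s)$ over $K$ up to D-isomorphism, and forgetting the D-structure defines a map to $H^1(K,G_0)$. Corollary~\ref{tosharp} is exactly the statement that this map is \emph{injective}: if two D-torsors $X,Y$ are $G_0$-isomorphic over $K$ as algebraic torsors, then $X^\sharp$ and $Y^\sharp$ are $G_0^\sharp$-isomorphic over $K$, so they coincide in $H^1_\d$. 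Since $K$ is bounded, Serre's theorem gives $H^1(K,G_0)$ finite, and finiteness of $H^1_\d(K,G)$ follows. This is precisely where \emph{differential largeness} enters, through the production of a sharp $K$-point in Corollary~\ref{tosharp}.

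For $m\ge 1$ the base of the sub-induction is $G$ algebraic, where $H^1_\d(K,G)=H^1(K,G)$ by Lemma~\ref{inductive}(1) and this is finite (it is $0$ for $\mathbb{G}_a^r$ and, by Hilbert~90, for tori, and finite in general by Serre since $K$ is bounded). Otherwise I would use the structure theory of linear differential algebraic groups (\cite{Cassidy}, \cite{Cassidy-Singer-Holder}) to peel off, from within the $1$-connected component, a normal $K$-subgroup $N$ whose quotient is a positive-dimensional algebraic group and whose $\omega$-rank coefficient is strictly smaller. The algebraic quotient is handled as just described, the kernel by induction, and the two are recombined through Lemma~\ref{inductive}(2).

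The main obstacle is the interaction of the non-abelian gluing with the structure theory. On the one hand, Lemma~\ref{inductive}(2) covers the fibres of the quotient map not by $H^1_\d(K,N)$ but by the various $H^1_\d(K,N_\mu)$, so the whole scheme only closes if every form of a base-kind group is again of base kind; this holds here because forms preserve both linearity and $U$-rank, which is exactly what the ``furthermore'' clause is designed to absorb. On the other hand, and more seriously, one must verify the structural claim that the infinite-rank part of a $1$-connected linear differential algebraic group is exhausted by algebraic subquotients, i.e.\ that the $1$-connected component carries a normal series with only algebraic and finite-rank factors, defined over $K$ and robust under passing to forms. Pinning this down is the step I expect to require the most care, and is the analogue, in our finiteness setting, of the structural reductions underlying the triviality theorem of \cite{Pillay2017}.
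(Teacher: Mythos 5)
Your skeleton is the paper's skeleton: the same d\'evissage through Lemma~\ref{inductive}(2) with the forms $N_\mu$ absorbed exactly as you describe, the same finite-rank base case via Remark~\ref{equivcat} and Corollary~\ref{tosharp} (yielding $|H^1_\d(K,G)|\leq |H^1(K,G_0)|$ and then Serre's theorem), and the same disposal of algebraic quotients via Lemma~\ref{inductive}(1). But the step you defer at the end --- that the $1$-connected component of a linear differential algebraic group is exhausted by algebraic and finite-rank subquotients, over $K$ and robustly under forms --- is not a routine citation to \cite{Cassidy} or \cite{Cassidy-Singer-Holder}; it is where the paper does essentially all of its work in the infinite-rank case, and as you have phrased it the claim is not even quite what the paper proves. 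The paper argues in two pieces. For the maximal normal solvable $1$-connected subgroup $N$, it passes to the Zariski closure $\bar N=T\ltimes U$ (torus and unipotent part, over $K$, by \cite{PR}); $U\cap N$ acquires a central series over $K$ whose infinite-rank quotients definably embed in, hence equal, $\mathbb{G}_a$, while $N/(U\cap N)$ embeds in $T$ and is handled either by splitting $T$ over $K$ into $1$-dimensional tori (all of whose proper differential algebraic subgroups have finite rank) or by the logarithmic derivative, which has finite-rank kernel and unipotent image. For the almost semisimple quotient, after killing the finite-dimensional centre, Cassidy's classification gives simple algebraic factors $A_i$ of the Zariski closure $\bar G$ with the connected components $G_i$ of $G\cap A_i$ Zariski-dense, $1$-connected, hence of infinite rank; by Cassidy's Theorem~17 (or Proposition~5.1 of \cite{Pillay-fields}) each $G_i=A_i$, so $G=\bar G$ is outright \emph{algebraic}. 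Note that this produces no normal series over $K$ in your sense: the $A_i$ are in general defined only over the algebraic closure of $K$ (Cassidy's Theorem~15 in fact requires an algebraically closed base, a correction the paper makes explicitly), and the argument lands back over $K$ only because the single identity $G=\bar G$ does. So your structural claim would need to be reformulated, not merely verified, before the induction closes --- which is exactly the part of the triviality argument of \cite{Pillay2017} that does not transfer for free when $K$ is no longer algebraically closed.

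Two smaller corrections. Hilbert~90 gives vanishing of $H^1$ only for \emph{split} tori; for a general torus over a bounded field one gets only finiteness, from Serre, which is fortunately all your argument needs (and the relevant group in the paper is in any case a proper differential algebraic subgroup of a torus, not the torus itself). And the assertion that the forms $N_\mu$ preserve linearity, while correct in the situations arising here (the twisting is by conjugation of $N$ by values of a cocycle in the ambient group $G\leq GL_n$), is doing real work in the ``furthermore'' clause and deserves explicit justification in a complete write-up.
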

\begin{proof}
We split the proof into two cases; namely, when $G$ is of finite rank and the general case. 
\medskip

\noindent {\bf Case 1.  $G$ has finite Morley rank.}   

By Remark~\ref{equivcat}(1), we may assume that $G$ is of the form $H^\sharp$ for a linear algebraic $D$-group $(H,s_H)$ over $K$. We will prove that 
$$|H^1_\d(K,G)| \leq |H^1(K,H)|$$
The result follows from this inequality. Indeed, as $K$ is bounded as a field, the right-hand-side is finite (see \cite[Chapter III, \S4.3]{Serrebook}).

Let $X$ and $Y$ be differential torsors for $G$ over $K$. By Remark~\ref{equivcat}(2), we may assume $X$ and $Y$ are of the form $V^\sharp$ and $W^\sharp$, respectively, for $(V,s_V)$ and $(W,s_W)$ algebraic $D$-torsors for $H$ over $K$. Now, if there is a $H$-isomorphism over $K$ between $V$ and $W$, then, by Corollary~\ref{tosharp}, there is a differential $G$-isomorphism over $K$ from $X$ and $Y$. This establishes the desired inequality.

\bigskip

\noindent {\bf Case 2.  $G$ is arbitrary.} 

The proof in this case is similar in strategy to that in  \cite[\S 2]{Pillay2017}, although we must take additional care as $K$ is not assumed to be algebraically closed.  We will make heavy use use of Lemma~\ref{inductive}. 


\medskip

By the inductive principle (Lemma~\ref{inductive}) and Case 1, we may assume that $G$ is 1-connected; namely, $G$ has no proper differential algebraic subgroup $N$ such that the homogeneous space $G/N$ is of finite rank.

Exactly as in \cite{Pillay2017}, let $N$ be the (unique) maximal normal solvable definable $1$-connected subgroup of $G$.  Then $H= G/N$ is almost semisimple in the sense that $H$ has no proper nontrivial normal commutative definable $1$-connected subgroup.  By uniqueness $N$ (so also $H$) is  defined over $K$. 
Moreover, as mentioned earlier $H$ is also linear.  By Lemma~\ref{inductive} it is enough to deal with the cases where $G = N$ and $G = H$.

\medskip
{\bf Case 2(a).  $G = N$.}

Let $\bar G$ be the Zariski closure of $G$ (inside the ambient $GL_{n}(\U)$). Then $\bar G$ is a connected solvable linear algebraic group defined over $K$.  By Theorems 2.3 and 2.4 of \cite{PR}, $\bar G$ is the semidirect product of connected algebraic subgroups $T$ and $U$ of $\bar G$, both defined over $K$, where $U$ is unipotent, and normal in $\bar G$, and $T$ is an algebraic torus (that is, $T$ is isomorphic,  over the algebraic closure of $K$, to some product of copies of the multiplicative group, as an algebraic group).  

$U\cap G$ is normal in $G$ and defined over $K$,  and $G/(U\cap G)$ is also defined over $K$ and embeds in $T$.   By Lemma~\ref{inductive} (2) it suffices to prove that each of $U\cap G$ and $G/(U\cap G)$ has finite $H^{1}_{\partial}(K,-)$. 

Now by \cite{PR} $U$ has a central series defined over $K$ where each quotient is isomorphic over $K$ to the additive group $(\U, +)$.  Intersecting with $G$, $U\cap G$ has a central series defined over $K$ where each quotient $U_{i}$ say, definably over $K$, embeds  in $(\U,+)$.  We may assume by Case 1, that each such quotient $U_{i}$  has infinite Morley rank,  so must be definably isomorphic over $K$ to $(\U,+)$.   But $H^{1}(K,\mathbb{G}_{a})$ is trivial, hence by Lemma~\ref{inductive}(1) so is $H^{1}_{\partial}(K, U_{i})$.  We have shown that $H^{1}_{\partial}(K,(U\cap G))$ is finite. 

Now we should deal with $T_{1} = G/(U\cap G)$. There are different approaches.  The first is as follows: $T$ itself, as a connected algebraic torus, over $K$, may not be isomorphic over $K$ to some power of $\mathbb{G}_{m}$. But it is isomorphic over $K$ to a finite product of connected $1$-dimensional algebraic tori. Such a connected $1$-dimensional algebraic torus has in $DCF_{0}$ Morley rank $\omega$ and all proper differential algebraic subgroups have finite Morley rank. So we can proceed, as in the previous paragraph, to show that  $H^{1}_{\partial}(K,T_{1})$ is finite, using the hypothesis that $K$ is bounded as a field, Serre's theorem, and Lemma~\ref{inductive}.

Alternatively, one can use the standard logarithmic derivative, $dlog$, to reduce to  the case of (commutative) unipotent groups, which is dealt with above. (And this method may be useful in other situations.)  We have seen that $T_{1}$ is definably embeddable over $K$ into $T$ which is a commutative subgroup of $GL_{n}(\U)$.  This logarithmic derivative takes a matrix $X\in GL_{n}(\U)$ to $X'\cdot X^{-1}$, and restricted to $T$,  takes $T$ into a commutative (differential algebraic) unipotent group, defined over $K$.  The kernel of $dlog|T_{1}$ has finite Morley rank, and the image is contained in a commutative unipotent algebraic group, all defined over $K$. Induction, together with the earlier methods yield that 
$H^{1}_{\partial}(K,T_{1})$ is finite, as required. 

\vspace{2mm}
\noindent
{\bf Case 2(b). $G = H$ (is almost semisimple).}

The centre $Z$ of $G$ is finite-dimensional, by almost simplicity of $G$, so by  Lemma~\ref{inductive} (2) and Case 1, we may quotient by $Z$ (i.e. assume $Z$ is trivial), and so $G$ is semisimple in the sense of \cite{Cassidy-semisimple}: no normal definable nontrivial connected commutative subgroup. In fact $G$ will also have trivial centre (but we will not use this). 

Let $\bar G$ be the Zariski closure of $G$. By \cite[Theorem 14]{Cassidy-semisimple}, $\bar G$ is a connected semisimple algebraic group, defined over $K$.  We will prove that $G = \bar G$. We are  in the situation of Theorem 15 of Cassidy \cite{Cassidy-semisimple}. This Theorem 15 of Cassidy depends on  an earlier paragraph from Cassidy's paper which cites a result, Theorem 27.5 of Humphreys \cite{Humphreys} on semisimple algebraic groups defined over a field $K$.   Cassidy is working with arbitrary $K$, but Humphreys is assuming $K$ to be algebraically closed. So in fact this Theorem 15 from \cite{Cassidy-semisimple}, to be correct,  should be stated under an assumption that $K$ is algebraically closed.  We conclude:
\newline
(i) there are normal, nontrivial, simple (in the sense of algebraic groups), pairwise commuting, algebraic subgroups $A_{1},...,A_{r}$ of $\bar G$  such that the induced morphism $\pi$ from $A_{1}\times ... \times A_{r}$ to $\bar G$ is surjective with finite kernel (and moreover any normal simple algebraic subgroup of $\bar G$ is among the $A_{i}$), and
\newline
(ii) let $G_{i}$ be the connected component of $G\cap A_{i}$ (in the sense of groups definable in $(\U,\d)$) for $i=1,..,r$. Then each $G_{i}$ is simple (as a differential algebraic group), and Zariski-dense in $A_{i}$, and of course the $G_{i}$ pairwise commute.  Moreover the natural homomorphism which we call also $\pi$ from $G_{1}\times ...\times G_{r}$ to $G$ is surjective with finite kernel. 

Both the $A_{i}$ (and so $G_{i}$) will in general be defined only over the algebraic closure of $K$. 

As $G$ is $1$-connected, it is easy to see that each $G_{i}$ is $1$-connected too, so has infinite Morley rank.  But $G_{i}$ is Zariski-dense in the simple algebraic group  $A_{i}$, so by Theorem 17 of \cite{Cassidy-semisimple} or Proposition 5.1 of \cite{Pillay-fields}, $G_{i} = A_{i}$ for $i=1,..,r$. It follows easily that $G = \bar G$. Namely $G$ is already an algebraic group over $K$.  Our assumption that $K$ is bounded as a field implies that $H^{1}(K,G)$ is finite (by Serre's theorem) so by Lemma~\ref{inductive} (1), $H^{1}_{\partial}(K, G)$ is also finite. 

\end{proof}

\begin{remark}
Some final comments. 
\begin{enumerate}
\item [(i)] We have focused here on  linear differential algebraic groups, but the results should extend to arbitrary differential algebraic groups $G$ over $K$ with the conclusion that $H^{1}_{\d}(K,G)$ is countable, rather than finite.

\item [(ii)] The differential largeness assumption in Theorem 4.1 is rather strong and we would like to replace it by weaker conditions, eventually finding the appropriate differential analogue of boundedness of a field.  A first approximation would be the condition that every system of linear differential equations over $K$ has a fundamental system of solutions in $K$. This should be tractable although the proofs in \cite{Pillay2017} will not go through directly (as $K$ is no longer assumed to be algebraically closed). The ``right" differential analogue of boundedness should be that for each $n$ there are only finitely many linear DE's (in vector form) over $K$ of the form $\d Y = AY$ where $Y$ is a $n\times 1$ vector of indeterminates, up to gauge transformation over $K$  (together with boundedness of $K$ as a field).  Deducing the finiteness theorem for the differential Galois cohomology of linear differential algebraic groups over $K$, using this (tentative) notion of differentially bounded, seems to be substantially more difficult.

\item [(iii)]Also note from the proofs in this section that the main case is when $G$ is finite-dimensional. Namely, assuming that $K$ is bounded as a field, it follows that if $H^{1}_{\d}(K,G)$ is finite for every finite rank linear differential algebraic group over $K$, then the same holds without the finite rank assumption.

\item [(iv)] The results in this section also go through for fields with several commuting derivations (differential largeness having been formulated in \cite{LSTressl2018} in the partial case).  But for the sake of exposition we focused on the ordinary case.
\end{enumerate}
\end{remark}

\section{Existence of parameterized Picard Vessiot (PPV) extensions}

In this section we present the  application of Theorem \ref{mainfinite} to the existence of PPV extensions, with prescribed properties (as described in the abstract). We now assume that $(K,\d_x,\d_t)$ is a differential field (of characteristic zero) with two commuting derivations.  We refer to  \cite{LS} and \cite{LSN} for some model theory of differentially closed fields with respect to several commuting derivations (in our case two).  $K^{\d_x}$ denotes the field of $\d_x$-constants of $K$.  We sometimes use $\Pi$ to refer to the set $\{\d_x, \d_t\}$ of (commuting) derivations. 

Consider a homogeneous linear $\d_x$-equation of order $n$ in matrix form:
\begin{equation}\label{lineareq}
\d_x (Z)=A Z
\end{equation}
where $Z$ is an indeterminate varying in $GL_n$ and $A$ is in $Mat_n(K)$.

A PPV extension of $K$ for the equation is precisely a $\Pi$-field $L$ extending $K$ which is generated, as a $\Pi$-field by a solution $Z$ of the equation \eqref{lineareq}, and such that $L^{\d_x} = K^{\d_x}$, where  as above, $K^{\d_x}$ denotes the field of $\d_x$-constants of $K$  (which note is a $\d_t$-field). 

With the above notation we show:
\begin{theorem} Suppose that $(K^{\d_x},\d_t)$ is existentially closed in $(K,\d_t)$ as $\d_t$-fields,  $K^{\d_x}$ is bounded as a field, and $(K^{\d_x},\d_t)$ is differentially large (in the sense of Definition 2.7). Then there is a PPV extension $(L,\d_x,\d_t)$  of $(K,\d_x,\d_t)$ for the equation \eqref{lineareq}  such that $(K^{\d_x},\d_t)$ is existentially closed in 
$(L,\d_t)$ as $\d_t$-fields. 
\end{theorem}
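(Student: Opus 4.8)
The plan is to build the PPV extension first over an enlarged base whose field of $\d_x$-constants is \emph{differentially closed}, where existence is classical, and then to descend the extension to $K$ using the finiteness and largeness hypotheses on $(K^{\d_x},\d_t)$. Concretely, I would work inside a saturated $(\U,\d_x,\d_t)\models \DCF_0$ in the two commuting derivations, and write $\mathcal C=\U^{\d_x}$ for its $\d_t$-field of $\d_x$-constants, which is differentially closed as a $\d_t$-field. Choose a differential closure $\mathcal C_0$ of $(K^{\d_x},\d_t)$ inside $\mathcal C$ and set $F=K\cdot\mathcal C_0\subseteq\U$. A first delicate point is to check that $F^{\d_x}=\mathcal C_0$, i.e. that adjoining the $\d_x$-constants $\mathcal C_0$ to $K$ creates no further $\d_x$-constants; this is a linear-disjointness statement over $K^{\d_x}$, and is where the hypothesis that $(K^{\d_x},\d_t)$ is existentially closed in $(K,\d_t)$ enters.

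Since $F^{\d_x}=\mathcal C_0$ is differentially closed as a $\d_t$-field, the parameterized Picard-Vessiot theory (Cassidy--Singer, in the form developed in \cite{LSN}) provides a PPV extension $\tilde L=F\langle Z\rangle_\Pi$ of $F$ for \eqref{lineareq}, with $\tilde L^{\d_x}=\mathcal C_0$ and parameterized differential Galois group $G$, a linear $\d_t$-differential algebraic group. Because \eqref{lineareq} is defined over $K$, one arranges (a second point, handled via \cite{LSN}) that $G$ descends to $K^{\d_x}$ and that the fundamental solutions carry the structure of a $\d_t$-differential algebraic torsor $V$ for $G$ over $K^{\d_x}$. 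The crux is that a $K^{\d_x}$-point of $V$ is exactly a fundamental solution generating a PPV extension of $K$ with $\d_x$-constants equal to $K^{\d_x}$, so the theorem reduces to producing a $K^{\d_x}$-point of $V$.

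To produce this point I would argue in two stages. First, the parameterized analogue of Kolchin's torsor theorem shows that the base change of $V$ along $K^{\d_x}\hookrightarrow K$ is the trivial $G$-torsor: the fundamental solution furnishes a $K$-point of $V$ as a $\d_t$-differential algebraic variety. Since having a point is an existential condition over $K^{\d_x}$ in the language of $\d_t$-fields, and $(K^{\d_x},\d_t)$ is existentially closed in $(K,\d_t)$, it follows that $V$ has a $K^{\d_x}$-point. Here the first main result enters: Theorem~\ref{mainfinite} guarantees that $H^1_{\d_t}(K^{\d_x},G)$ is finite (as $K^{\d_x}$ is bounded and $(K^{\d_x},\d_t)$ differentially large), which is what lets us pass between $V$ and the algebraic D-torsor governing it, control its finitely many forms, and invoke the geometric characterization of differential largeness (Fact 2.8 and Corollary~\ref{tosharp}) to upgrade an algebraic point to a genuinely sharp ($\d_t$-differential) $K^{\d_x}$-point. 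Such a point yields a solution $Z_0\in GL_n$ with no new $\d_x$-constants, and we set $L=K\langle Z_0\rangle_\Pi$.

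For the additional conclusion that $(K^{\d_x},\d_t)$ is existentially closed in $(L,\d_t)$, I would exploit that differential largeness delivers not one point of $V$ but a Kolchin-dense set of points: choosing the $K^{\d_x}$-point generically over $K$ ensures no new existential $\d_t$-data over $K^{\d_x}$ is realized in $L$, and since $L^{\d_x}=K^{\d_x}$ and $(K^{\d_x},\d_t)$ is differentially large, existential closedness as $\d_t$-fields follows from existential closedness of $K^{\d_x}$ in $L$ as fields. I expect the main obstacle to be the descent step itself: transferring $G$ and its solution torsor $V$ from the differentially closed base $\mathcal C_0$ down to $K^{\d_x}$, and verifying that the existential-closedness transfer of a $K$-point to a $K^{\d_x}$-point respects the full $\d_t$-differential-algebraic (not merely algebraic) structure. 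This is precisely where finiteness of $H^1_{\d_t}(K^{\d_x},G)$ and the largeness of $(K^{\d_x},\d_t)$ must be combined, and constitutes the ``few delicate points'' in the adaptation of \cite{KP}.
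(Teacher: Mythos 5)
Your architecture---construct the PPV extension over a base $F=K\cdot\mathcal C_0$ whose $\d_x$-constants are differentially closed, descend the Galois group and a solution torsor $V$ to $K^{\d_x}$, and then transfer a point by existential closedness---breaks down at its central step. The claim that ``the fundamental solution furnishes a $K$-point of $V$'' is false: a fundamental solution matrix lies in $GL_n$ of the PPV extension (or of $\U$), never in $GL_n(K)$ in general, and Kolchin-type torsor theorems trivialize the torsor of fundamental solutions over the extension $L$, not over $K$; a $K$-rational point of that torsor would mean \eqref{lineareq} is already solved in $K$. So the e.c.\ transfer ``$V$ has a $K$-point, having a point is existential over $K^{\d_x}$, hence $V(K^{\d_x})\neq\emptyset$'' has no witness to start from. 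The only genuinely $K$-rational point available is the identity matrix $e$, which lies in the fibre $X_{a'}$ over $a'=F(e)$; but $a'$ is rational only over $K$, not over $K^{\d_x}$, so that fibre is not defined over $K^{\d_x}$ and the e.c.\ hypothesis cannot be applied to it directly. (What pure e.c.\ does give is $\mathcal O(K^{\d_x})\neq\emptyset$, i.e.\ the bare existence of a PPV extension---Corollary 5.5, already in \cite{LSN}---which is strictly weaker than the theorem.) There is also no single canonical torsor $V$ over $K^{\d_x}$ to descend to: the possible $K^{\d_x}$-forms are exactly what $H^1_{\d_t}(K^{\d_x},H_a)$ measures, and your appeal to Theorem~\ref{mainfinite} (``control its finitely many forms'') is never converted into an actual deduction; likewise ``choose the $K^{\d_x}$-point generically over $K$'' is not an argument for e.c.\ of $(K^{\d_x},\d_t)$ in $(L,\d_t)$. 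Your preliminary step $F^{\d_x}=\mathcal C_0$ is itself a nontrivial claim that e.c.\ plus ``linear disjointness'' does not obviously deliver, and it is nowhere needed.

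The paper closes precisely this gap with the groupoid formalism and an elementary-extension detour. PPV extensions of $K$ correspond to connected components of the groupoid $\mathcal G(K^{\d_x})$ (Lemma 5.9); Theorem~\ref{mainfinite} is used concretely, via the embedding of the set of components into $H^1_{\d_t}(K^{\d_x},H_a)$, to show there are only finitely many components (Lemma 5.10). Finiteness of components is a first-order fact with parameters in $K^{\d_x}$, so it persists in an elementary extension $(K_1,\d_t)\succeq(K^{\d_x},\d_t)$ containing $(K,\d_t)$---such a $K_1$ exists exactly because $(K^{\d_x},\d_t)$ is e.c.\ in $(K,\d_t)$. Since $X_{a'}(K_1)$ contains $e$, some $a\in\mathcal O(K^{\d_x})$ has $Mor(a,a')(K_1)\neq\emptyset$, hence $X_a(K_1)\neq\emptyset$ (Lemma 5.12): this is the mechanism that converts the merely $K$-rational datum $a'$ into a $K^{\d_x}$-rational $a$ while retaining a rational point in a field over which $K^{\d_x}$ is still elementary. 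The final step is then structural, not generic: for $Z$ a $K_1$-irreducible component of $X_a$ with a $K_1$-point, $K_1\langle Z\rangle_{\d_t}$ is the function field of a coset of a proalgebraic group with a $K_1$-rational point, so largeness of $K_1$ gives e.c.\ as fields, differential largeness upgrades this to $\d_t$-fields, and $K^{\d_x}\preceq K_1$ descends it to $K^{\d_x}$ inside the PPV extension $K\langle X_a\rangle_{\d_t}$ (Lemma 5.11(iii)). You would need to supply analogues of all three ingredients---the classification by groupoid components, the transfer of finiteness to $K_1$, and the function-field largeness argument---before your outline could be made into a proof.
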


This is the PPV generalization of Theorem 1.5 of \cite{KP}.  
In \cite{KP} the context was simply a linear $DE$ 

\begin{equation}\label{linearordinary}
\d Z = AZ
\end{equation}
over a differential field $(K,\d)$ where the constants $K^{\d}$ are asssumed to be bounded, large, and existentially closed in the field $K$. And we produced in Theorem 1.5 of \cite{KP} a PV extension $(L,\d)$ of $K$ for the equation such that $K^{\d}$ is existentially closed in $L$ as fields.   We have chosen here  to summarize the steps involved in the proof of Theorem 1.5 of \cite{KP}, and then show how these adapt/generalize to the PPV case, partly for pedagogical reasons, relevant to future generalizations.  Although just going through the required steps in  the new situation with references to \cite{KP} may have been sufficient. In any case, until the end of Remark 5.2 we will be working in the case of one derivation.

\vspace{2mm}
\noindent
(I) (Interpretations.)  Let $\mathcal Y$ be the solution set of the equation \eqref{linearordinary} in the universal domain $(\U,\d)$. 
Consider the $2$-sorted structure $(\U^{\d},{\mathcal Y})$ equipped with all $K$-definable structure from $(\U,\d)$. 
Then in a suitable language $L_{\d,K}$, $(\U^{\d},{\mathcal Y})$ is proved in \cite{KP} to be an elementary substructure of $(\U, GL(n,\U))$, where the latter is a suitable reduct of the algebraically closed field $\U$.  More precisely the language $L_{\d,K}$ consists of symbols $R_{X}$ for D-subvarieties of $\U^{r} \times (GL(n,\U))^{s}$ over $K$, namely subvarieties $X$ of $\U^{r} \times (GL(n,\U))^{s}$ which are defined over $K$ and such that  $((\U^{\d})^{r} \times {\mathcal Y}^{s})\cap X$ is Zariski-dense in $X$. The interpretation of $R_{X}$ in $(\U, GL(n,\U))$ is the tautolological one, and its interpretation in $(\U^{\d},\mathcal Y)$ is $((\U^{\d})^{r} \times {\mathcal Y}^{s})\cap X$.  What is important is that the subsets of $(\U^{\d})^{r} \times {\mathcal Y}^{s}$ which are definable over $K$ in the differentially closed field $(\U,\d)$ are precisely the sets which are (quantifier-free) definable without parameters in the $L_{\d,K}$-structure  $(\U^{\d},{\mathcal Y})$. 

The map assigning to each relation symbol $R_{X}$ of $L_{\d,K}$ the formula over $K$ defining $X$ in the algebraically closed field $\U$ yields an interpretation of $Th(\U^{\d},\mathcal Y)_{L_{\d,K}}$ in $ACF_{K}$  ($ACF$ with constants for elements of $K$) and we call this interpretation $\omega$. 

\vspace{2mm}
\noindent
(II) (Galois group and PV extensions.)  This is a construction of a certain $K$-definable function $f$ from ${\mathcal Y}$ to some power of $\U^{\d}$ which, in a sense to be made precise later, classifies
Picard-Vessiot extensions of $K$ for \eqref{linearordinary}.  Let $Aut({\mathcal Y}/K(\U^{\d}))$ be the group of  permutations of ${\mathcal Y}$ induced by automorphisms of the differential field $(\U,\d)$ which fix pointwise both $K$ and the constants $\U^{\d}$.  Let $b\in {\mathcal Y}$. Then the map $\rho_{b}$ taking $\sigma$ to $\sigma(b)b^{-1}$ is an isomorphism between  $Aut({\mathcal Y}/K(\U^{\d}))$ and a differential algebraic subgroup $H^{+}$ of $GL(n,\U)$ which is defined over $K$. Moreover $\rho_{b}$ does not depend on the choice of $b$ so we just call it $\rho$.  Let ${\mathcal Y}/H^{+}$ denote the set of right cosets of $H^{+}$ in ${\mathcal Y}$, equivalently the set of orbits under the action of $H^{+}$ on ${\mathcal Y}$ by left multiplication in $GL(n,\U)$.  Then ${\mathcal Y}/H^{+}$ is in $K$-definable bijection with a subset $\mathcal O$ of some Cartesian power of $\U^{\d}$, definable over $K\d$. And $f:{\mathcal Y} \to {\mathcal O}$ is the function that we wanted to construct. So $f(b_{1}) = f(b_{2})$ iff $b_{1} = hb_{2}$ for some $h\in H^{+}$. 

Moreover $b$ generates over $K$ a Picard-Vessiot extension for \eqref{linearordinary}  iff  $f(b)$ is a tuple of constants belonging to  $K$, i.e. in $K^{\d}$. 

\vspace{2mm}
\noindent
(III)   (Galois groupoid.) Given the map $f:{\mathcal Y} \to {\mathcal O}$, and $a\in {\mathcal O}$, ${\mathcal Y}_{a}$ denotes the fibre $f^{-1}(a)$ which is precisely $H^{+}b$ for some/any $b$ in the fibre.  For $a_{1},a_{2}\in {\mathcal O}$, $H_{a_{1},a_{2}}$ denotes $\{b_{1}^{-1}b_{2}: b_{1}\in {\mathcal Y}_{a_{1}}, b_{2}\in {\mathcal Y}_{a_{2}}\}$.  And $H_{a_{1}}$ denotes $H_{a_{1},a_{1}}$.  Note that multiplication is meant always in the sense of $GL(n,\U)$.  For $a\in {\mathcal O}$, $H_{a}$ is an algebraic subgroup of $GL(n, \U^{\d})$, and is the ``usual" Galois group of the equation \eqref{linearordinary}, as an algebraic group in the constants.

The Galois groupoid ${\mathcal G}$ of \eqref{linearordinary} has as objects the set $\mathcal O$, and for each $a_{1}, a_{2}$ the set $Mor(a_{1},a_{2})$ of morphisms between $a_{1}$ and $a_{2}$ is precisely $H_{a_{1},a_{2}}$. Lemma 4.3 of \cite{KP} explains why this is a groupoid, and why and how it is (quantifier-free)  definable over $K^{\d}$ in the algebraically closed field $\U^{\d}$.

 If $C$ is a field of constants containing $K^{\d}$ then  ${\mathcal G}(C)$ denotes the groupoid with objects ${\mathcal O}(C)$ and for $a_{1}, a_{2}\in {\mathcal O}(C)$,  the morphisms between $a_{1}$ and $a_{2}$ consists of the set $Mor(a_{1},a_{2})(C)$. (Namely just take $C$ points of everything.) The main point  is Proposition 4.6 of \cite{KP} which states that the set of Picard-Vessiot extensions of $K$ for \eqref{linearordinary}  is in natural one-to-one correspondence with  the set of connected components of the groupoid ${\mathcal G}(K^{\d})$.

\vspace{2mm}
\noindent
(IV)  (Galois cohomology.) The connection with Galois cohomology is as follows: Using the construction in (III),  assume that  ${\mathcal G}(K^{\d})$ is nonempty which means that there is $a\in {\mathcal O}(K^{\d})$. So $H_{a}$ is a linear algebraic group, defined over $K^{\d}$. Then the proof of Corollary 5.3 of \cite{KP} yields an injection of the set of connected components of ${\mathcal G}(K^{\d})$ into $H^{1}(K^{\d},H_{a})$.  So boundedness of the field $K^{\d}$ implies, via Serre's theorem, that ${\mathcal G}(K^{\d})$ has finitely many connected components.

\vspace{2mm}
\noindent
(V) (End of proof.)

Lemma 4.7 of \cite{KP} gives more information about the interpretation $\omega$  constructed in (I).  Recall that for a set $Z$, say, $\emptyset$-definable in the $L_{K,\d}$-structure $(\U^{\d},{\mathcal Y})$, $\omega(Z)$ is the corresponding set $\emptyset$-definable in the $L_{K,\d}$-structure  $(\U,GL(n,\U))$. And this of course extends to sets definable with  parameters. 
For notation:  Following \cite{KP} we define ${\mathcal O}(\U)$ to be $\omega({\mathcal O})$.  Let $f:{\mathcal Y}\to {\mathcal O}$, be as given in (II), then $F$ denotes $\omega(f)$, and note that  $F$ is a function from $GL(n,\U)$ onto ${\mathcal O}(\U)$  (of course definable over $K$ in the algebraically closed field $\U$).  Let $X_{a}$ denote $F^{-1}(a)$ for $a\in {\mathcal O}(\U)$. We write $H$ for the Zariski closure of $H^{+}$ in $GL(n,\U)$. (In fact as $H^{+}$ is $\emptyset$-definable in the $eq$ of the $L_{K,\d}$-structure  $(\U^{d}, {\mathcal Y})$ we could also define $H$ to be $\omega(H^{+})$.) We also write ${\mathcal G}(\U)$ for $\omega({\mathcal G})$. And likewise we write $Mor(\U)$ for $\omega(Mor)$ where $Mor$ is the set of morphisms of ${\mathcal G}$. 
Let $h:{\mathcal Y}\times {\mathcal Y} \to GL(n, \U^{\d})$ be $h(x,y) = x^{-1}y$.  Then  Lemma 4.7 of \cite{KP} says:
\newline
(i) $\omega(h):GL(n,\U)\times GL(n,\U) \to GL(n,\U)$ is precisely $\omega(x,y) = x^{-1}y$.
\newline
(ii) The fibres $X_{a}$ for $a\in {\mathcal O}(\U)$ are precisely the right cosets of $H$ in $GL(n,\U)$, so ${\mathcal O}(\U)$ is the homogeneous space $GL(n,\U)/H$.
\newline
(iii)  If $a\in {\mathcal O}(K^{\d})$ then $X_{a}$ is $K$-irreducible, is the Zariski closure of  ${\mathcal Y}_{a}$ and the Picard-Vessiot extension of $K$ corresponding to $a$ and \eqref{linearordinary} is precisely the function field $K(X_{a})$ of $X_{a}$. 
\newline
(iv) For $a_{1}, a_{2}\in {\mathcal O}(\U)$, $Mor(\U)(a_{1},a_{2}) = \{b_{1}^{-1}b_{2}: b_{1}\in X_{a_{1}}, b_{2}\in X_{a_{2}}\}$. 

(This basically follows by transfer from $(\U^{d},{\mathcal Y})$ via $\omega$. )

\vspace{2mm}
\noindent
Now, assuming that $K^{\d}$ is existentially closed in $K$ (as a field) and that $K^{\d}$ is bounded and large,  Theorem 1.5 of \cite{KP} is proved by first showing, using Lemma 4.7:

\vspace{2mm}
\noindent
(v)  For some elementary extension (as fields) $K^{1}$ of $K^{\d}$  which contains $K$, there is $a\in {\mathcal O}(K^{\d})$ such that $X_{a}$ has a $K_{1}$-rational point. 

\vspace{2mm}
\noindent
Now assuming for simplicity that $X_{a}$ is also $K_{1}$-irreducible (otherwise replacing it by a $K_{1}$-irreducible component which has a $K_{1}$-point), $K_{1}$ is existentially closed in the function field $K_{1}(X_{a})$ (as fields). Hence  $K^{\d}$ is existentially closed in $K^{1}(X_{a})$ as fields.  But $K^{\d} \leq K(X_{a})\leq K_{1}(X_{a})$, whereby $K^{\d}$ is existentially closed in $K(X_{a})$.  By (iii) from (V) above, we have found a Picard-Vessiot extension $L$ of $K$ for \eqref{linearordinary} such that $K^{\d}$ is existentially closed in $L$ (as fields). 

\vspace{1mm}
\begin{remark} 
\begin{enumerate}
\item[(i)] Crespo, Hajto and van der Put \cite{CHvdP} prove the conclusion of Theorem 1.5 of \cite{KP} (in the linear case) when $K^{\d}$ is real closed or $p$-adically closed (and $K^{\d}$ is existentially closed in $K$) but their proof works only assuming boundedness (and largeness) of $K^{\d}$, and goes through the Tannakian  formalism.
\newline
\item [(ii)] In fact Theorem 1.5 of \cite{KP} is stated in the more general context of a logarithmic differential equation $dlog_{G}(y) = a$ on a not necessarily linear algebraic group $G$ defined over the constants of $K$ and where $a\in LG(K)$. In this situation the Tannakian theory (which is a linear theory) is not available, and so the technology in (I) to (V) above was really needed.
On the other hand from boundedness of $K^{\d}$ we only have countability (rather than finiteness) of $H^{1}(K^{\d}, G)$ for arbitrary algebraic groups over $K^{\d}$, but this implies finiteness of ``definable" chunks of $H^{1}(K^{\d},G)$, which was enough to obtain finitely many connected components of the set of $K^{\d}$ points of the relevant groupoid. 
\newline
\item [(iii)]  The paper \cite{KP} also included a simple existence theorem for PV (or strongly normal extensions), Theorem 1.3, as well as a certain uniqueness theorem, Theorem 1.4.  The existence theorem said that assuming only that $K^{\d}$ is existentiallly closed in $K$ as fields then there is a Picard-Vessiot extension of $K$ for \eqref{linearordinary}.  This used just steps (I) and (II) above, and was proved as follows: Let $F: GL(n,\U) \to {\mathcal O}(\U)$ be $\omega(f)$. Then $F$ applied to the identity of $GL(n,\U)$ is in ${\mathcal O}(K)$. So by existential closedness, ${\mathcal O}(K^{\d})$ is nonempty, yielding our Picard-Vessiot extension.  In fact as discussed in more detail below, this part of \cite{KP} was already extended to the $PPV$ (in fact the more general parameterized strongly normal) theory by the first author and Nagloo \cite{LSN}. 
\newline
\item [(iv)]  In the linear/PV case of Theorem 1.5 of \cite{KP}, actually the largeness assumption on $K^{\d}$ is not needed. This was pointed out by the first author to the second author, and due to the fact that if $G$ is a connected linear algebraic group over a field $K$, then $G(K)$ is Zariski-dense in $G$. So if $X$ is a PHS over $K$ for $G$ with a $K$-rational point then $X(K)$ is Zariski-dense in $X$. This is not necessarily the case for arbitrary algebraic groups such as abelian varieties.  So the more general case of Theorem 1.5 of \cite{KP} as discussed in (ii) above {\em does} need the largeness assumption on $K^{\d}$ (in addition to boundedness). 
\end{enumerate}
\end{remark}

\vspace{5mm}
\noindent
We will now begin going through the adaptation of steps (I) - (V) above to the PPV situation so as to prove Theorem 5.1 above. 
In the new situation of the equation \eqref{lineareq}, there  is also a universal domain $(\U,\d_x, \d_t)$, a saturated model of $DCF_{0,2}$ the theory of differentially closed fields with two commuting derivations.  As mentioned earlier we let $\Pi = \{\d_x,\d_t\}$. 
$(K,\d_x, \d_t)$ is a differential subfield of $\U$ and  we are given the equation \eqref{lineareq} above. ${\mathcal Y}$ now denotes the solution of \eqref{lineareq} in $\U$, and note again
${\mathcal Y}\subseteq GL(n,\U)$. 
The role of the $\d$ constants of $K$ and of   the universal domain is now played by $K^{\d_x}$ and $\U^{\d_x}$.  In fact the first author and Nagloo already carried out the generalizations of Steps (I) and (II) in \cite{LSN}, leading to the theorem that if $(K^{\d_x},\d_t)$ is existentially closed in $(K,\d_t)$ as $\d_t$-fields, then there is a PPV extension of $K$ for \eqref{lineareq}.  (Actually \cite{LSN}  works in the more general context  of a commmuting set $\Pi$ of $m$ derivations and a partition of $\Pi$ into nonempty sets ${\mathcal D}$ and $\Delta$, as well as  with $GL_{n}$ replaced by an arbitrary $\Delta$-algebraic group $G$ defined over $K^{\mathcal D}$.)   

The generalization of the notion of an algebraic $D$-variety from Section 1, is  that of a {\em parameterized} or {\em relative} $D$-variety from \cite{LS} and \cite{LSN}.  We will follow the notation in \cite{LSN}. 
By a $\d_t$-variety (or differential $\d_t$-variety) over $K$   we mean a subset $V$ of $\U^{n}$ defined by a finite system of $\d_t$-polynomials in  indeterminates  $\bar y=(y_1,\dots,y_n)$ say. Then by  the {\em parameterized prolongation} 
$\tau_{\d_x}(V)$ of $V$ is meant the $\d_t$-subvariety of $\U^{2n}$ defined by 
the following (differential) equations in indeterminates $y_{1},..,y_{n}, u_{1},..,u_{n}$:  First the set of $f(\bar y) = 0$ for all $\d_t$-polynomials over $K$ vanishing on $V$. 
Secondly the set of  $\sum_{i\geq 0, j=1,\dots,n} (\partial f/\partial(\delta^i y_j))\delta^i u_j   +   f^{\d_x}(\bar y)=0$ 
 where $f$ ranges over the $\d_t$-polynomials above, and where $f^{\d_t}$ means the resulting of hitting the coefficients of $f$ with $\d_t$.    A {\em parameterized} $D$-variety over $K$ is a pair $(V,s)$ where $V$ is a $\d_t$-variety over $K$, and $s: V\to \tau_{\d_t}(V)$ is a $\d_t$-polynomial section of the projection $\tau_{\d_t}(V)\to V$ on the first $n$-coordinates. 
By $\nabla_{\d_x}$ (or $\nabla_{V,\d_x}$) we mean the map taking $(v_{1},..,v_{n})\in V$ to $(v_{1},..,v_{n},\d_x(v_1),...,\d_x(v_n))\in \tau_{\d_t}(V)$. 

$(V,s)^{\sharp}$ denotes  $\{ {\bar v}\in V: s({\bar v}) = \nabla({\bar v})\}$, a (differential) $\Pi$-algebraic subvariety of $V$ (defined over $K$, if $(V,s)$ is).  There is a natural notion of a parameterized $D$-subvariety of $(V,s)$.  Several key facts  (analogues of facts about algebraic $D$-varieties) are given in Section 2 of \cite{LSN}, including the following characterization: 
Let $(V,s)$ be a parameterized $D$-variety, and let $W$ be a $\d_t$-subvariety of $V$. Then $W$ is a parametrized $D$-subvariety of $(V,s)$ iff $W\cap (V,s)^{\sharp}$ is Kolchin-dense in $W$. 

\vspace{2mm}
\noindent
(I)' (Interpretations.)  \cite{LSN} establishes the interpretation of the theory of the  two sorted structure  $(\U^{\d_x},{\mathcal Y})$ equipped with all relations definable with parameters from $K$ in $(\U,\d_x,\d_t)$  in the theory $Th(\U,\d_t)$ with parameters for elements of $K$.  The details of the interpretation and of the language chosen are somewhat  delicate, and this is maybe the most tricky part of the generalization of the single derivation situation. In the case at hand we have, in analogy with (I), relation symbols $R_{X}$ for $\d_t$-algebraic subvarieties $X$ of  $\U^{r}\times GL(n,\U)^{t}$  defined over $K$ such that $X\cap ((\U^{\d_x})^{r}\times {\mathcal Y}^t)$ is Kolchin-dense in $X$.  We call this language $L_{\Pi,K}$. 
 So
\begin{lemma} (\cite{LSN}, Corollary 4.3)   Consider both $(\U^{\d_x},{\mathcal Y})$ and  $(\U, GL(n,\U))$ as  $L_{\Pi,K}$-structures, where $R_{X}$ is interpreted as $X$ itself in $(\U, GL(n,\U))$ and as $X\cap ((\U^{\d_x})^{r}\times {\mathcal Y}^t)$ in $(\U^{\d_x},{\mathcal Y})$.  Then $(\U^{\d_x},{\mathcal Y})$ is an elementary substructure of $(\U, GL(n,\U))$. 
\end{lemma}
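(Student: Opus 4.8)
The plan is to deduce this elementary embedding from quantifier elimination in $DCF_{0,2}$ together with an interpretation of $Th(\U^{\d_x},{\mathcal Y})_{L_{\Pi,K}}$ into $Th(\U,\d_t)$ with parameters from $K$, exactly paralleling step (I) but with $\d_t$-varieties in place of varieties and Kolchin-density in place of Zariski-density. First I would pin down the quantifier-free $L_{\Pi,K}$-definable sets. The relation symbols $R_X$ are, by design, the restrictions to $(\U^{\d_x})^r\times{\mathcal Y}^t$ of the $\d_t$-subvarieties $X$ over $K$ whose intersection with that product is Kolchin-dense in $X$; the first task is to show that finite Boolean combinations of these restrictions are \emph{precisely} the subsets of $(\U^{\d_x})^r\times{\mathcal Y}^t$ that are definable over $K$ in the full structure $(\U,\d_x,\d_t)$. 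This is where the density condition does its work. By quantifier elimination in $DCF_{0,2}$ any such $K$-definable set is a Boolean combination of zero-sets of $\Pi$-polynomials over $K$; restricting to the product \emph{eliminates} the $\d_x$-structure, since on $\U^{\d_x}$ one has $\d_x=0$, while on ${\mathcal Y}$ the relation $\d_x Z=AZ$ (with $A\in Mat_n(K)$) rewrites $\d_x$ of any entry, and of any $\d_t$-derivative of an entry, as a $K$-polynomial in the remaining coordinates. Thus each restricted set becomes the restriction of a $\d_t$-variety over $K$, and imposing Kolchin-density in its $\d_t$-closure produces exactly an $R_X$.

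With this dictionary in hand I would run the Tarski--Vaught test. Given an $L_{\Pi,K}$-formula $\varphi(\bar v,w)$ and parameters $\bar a$ from $(\U^{\d_x},{\mathcal Y})$, suppose $(\U,GL(n,\U))\models\exists w\,\varphi(\bar a,w)$. Translating through the interpretation into $Th(\U,\d_t)$, the set of witnesses $w$ is a $\d_t$-constructible set defined over $K\langle\bar a\rangle$ sitting inside the relevant product; the density built into the language guarantees that the fibre over $\bar a$ is Kolchin-dense in its $\d_t$-closure, and hence (using that $\U^{\d_x}$ is itself $\d_t$-differentially closed and that $\U$ is saturated) meets $(\U^{\d_x})^r\times{\mathcal Y}^t$. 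Pulling such a witness back to the small structure yields $b$ with $(\U^{\d_x},{\mathcal Y})\models\varphi(\bar a,b)$, which is the Tarski--Vaught condition. Since $(\U^{\d_x},{\mathcal Y})$ is visibly a substructure of $(\U,GL(n,\U))$ --- its universe is a subset and, the language being relational, the relations restrict by definition --- elementarity follows.

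The hard part will be the first step: the precise bookkeeping that the Kolchin-density condition on $R_X$ is exactly what makes the restriction map commute with Boolean operations and, more delicately, with projections (so that existential witnesses descend). This is where the genuinely new phenomenon over the single-derivation case appears, since the ambient reduct $(\U,GL(n,\U))$ is now a reduct of the $\d_t$-differentially closed field rather than of a pure algebraically closed field; the transfer principle must therefore be run inside the model theory of $DCF_0$ in the derivation $\d_t$, using that $\U^{\d_x}$ is $\d_t$-differentially closed. Verifying that the $\d_x$-structure on ${\mathcal Y}$ is uniformly $\d_t$-definable via $\d_x Z=AZ$, so that it contributes nothing beyond the $\d_t$-structure after restriction, is the delicate point. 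Once that is secured the argument is a routine adaptation of step (I), and for the full details one may simply invoke \cite{LSN}.
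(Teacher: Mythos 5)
Note first that the paper does not actually prove this lemma: it is quoted verbatim from \cite{LSN} (Corollary 4.3), and the surrounding text explicitly flags the underlying interpretation as ``somewhat delicate'' and ``maybe the most tricky part'' of the generalization from one derivation. So your proposal is really being measured against the argument of \cite{LSN}, not against anything in this paper. Your overall architecture does match that argument: the rewriting step is correct and correctly executed (on $\U^{\d_x}$ one has $\d_x=0$, on $\mathcal Y$ the relation $\d_x Z=AZ$ together with commutativity reduces every mixed derivative $\d_x^m\d_t^i z_{jk}$ to a $\d_t$-polynomial over $K$ in the entries and their $\d_t$-derivatives), and you correctly locate the crux in the stability of the named relations under Boolean operations and, above all, projections. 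Deferring that crux to \cite{LSN} is defensible, since the paper does exactly the same.

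However, the one substantive step you do assert in your own words is the step that fails as stated. In the Tarski--Vaught paragraph you claim the witness set over $\bar a$ ``is Kolchin-dense in its $\d_t$-closure, and hence \ldots meets $(\U^{\d_x})^r\times\mathcal Y^t$.'' The first clause is essentially vacuous: by quantifier elimination for $DCF_0$ in the derivation $\d_t$, every set definable in the reduct $(\U,GL(n,\U))$ is $\d_t$-constructible, and a constructible set always contains a relatively open dense subset of its Kolchin closure. And the inference is a non sequitur: density of a set in \emph{its own} closure says nothing about whether it meets the product of the two sorts. What the argument actually requires is that the $\d_t$-Kolchin closure $W$ of the witness set --- a $\d_t$-variety over $K\langle\bar a\rangle$ --- be a \emph{parameterized (relative) D-subvariety} of the ambient relative D-variety whose sharp points are exactly $(\U^{\d_x})^r\times\mathcal Y^t$ (recall $\mathcal Y$ is the sharp-point set of the relative D-structure on $GL_n$ determined by $A$). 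Then the characterization recalled just before the lemma (a $\d_t$-subvariety $W$ of $(V,s)$ is a relative D-subvariety iff $W\cap(V,s)^\sharp$ is Kolchin-dense in $W$), combined with the geometric axioms of $DCF_{0,2}$ producing sharp points in nonempty relatively open subsets, is what forces the constructible witness set to meet the product. Propagating the relative D-subvariety property through negations and projections, uniformly over parameters $\bar a$ from the product (which works because $\d_x$ of the coordinates of $\bar a$ is $\d_t$-polynomial in them over $K$, so $K\langle\bar a\rangle$ is again a $\Pi$-field over which the product is a sharp-point set), is precisely the content of \S\S3--4 of \cite{LSN} that you set aside. Until the density step is repaired along these lines, your sketch is an outline of the cited proof rather than a proof.
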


As previously we call the interpretation $\omega$. 

\vspace{2mm}
\noindent
(II)' (Galois group and PPV extensions.)
 This is discussed in Section 5 of \cite{LSN}, see Lemma 5.2 and Proposition 5.3 there, and is a straightforward adaptation of (II).  $Aut({\mathcal Y}/K\langle\U^{\d_x}\rangle)$ denotes the group of permutations of ${\mathcal Y}$ induced by automorphisms of the ambient differential closed field $(\U, \d_x, \d_t)$ which fix $K$ and $\U^{\d_x}$ pointwise (and there are other descriptions).  Here $K\langle\U^{\d_x}\rangle$ denotes the $\{\d_x, \d_t\}$-field generated over $K$ by 
$\U^{\d_x}$.   

Claims 1 and 2 in the proof of  Proposition 5.3 of \cite{LSN} establish that for $\sigma\in Aut({\mathcal Y}/K\langle\U^{\d_x}\rangle)$  and $\alpha\in {\mathcal Y}$, $\sigma(\alpha)\alpha^{-1}$ (multiplication in the sense of $GL(n,\U)$) does not depend on the choice of $\alpha$, and that the map taking $\sigma$ to $\sigma(\alpha)\alpha^{-1}$ establishes an isomorphism between $Aut({\mathcal Y}/K\langle\U^{\d_x}\rangle)$ and a $K$-definable subgroup of $GL(n,\U)$ which we again call $H^{+}$, the intrinsic Galois group.  By elimination of imaginaries let ${\mathcal O}$ again be the set of orbits under the action of $H^{+}$ on ${\mathcal Y}$ by left multiplication, a $K$-definable set in  $(\U,\d_x,\d_t)$ which up to $K$-definable bijection can be assumed to be a subset of some Cartesian power of $\U^{\d_x}$.  So, just as before, we get a $K$-definable function $f:\mathcal Y\to \mathcal O$ such that $f(b_1)=f(b_2)$ iff $b_1=h b_2$ for some $h\in H^+$. The following is a mixture of Proposition 5.3 from \cite{LSN} and its proof.

\begin{fact} If $b\in {\mathcal Y}$ and $a= f(b)\in K^{\d_x}$ then $K\langle b\rangle$ (the $\Pi$-differential field generated by $K$ and $b$) is a PPV extension of $K$ for \eqref{lineareq}, and in fact the formula $y\in {\mathcal Y} \wedge f(y) = a$ isolates the type of $b$ over $K\langle\U^{\d_x}\rangle$.
\end{fact}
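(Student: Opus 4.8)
The plan is to derive both assertions from one structural fact: that the fibre $f^{-1}(a)$ is precisely the orbit of $b$ under $Aut(\mathcal Y/K\l\U^{\d_x}\r)$. First I would record that, since $f(b_1)=f(b_2)$ iff $b_1=hb_2$ for some $h\in H^+$, the fibre $f^{-1}(a)$ is the coset $H^+b$. Now because $a=f(b)\in K^{\d_x}$, any $\s\in Aut(\mathcal Y/K\l\U^{\d_x}\r)$ fixes $a$, so $f(\s(b))=\s(f(b))=a$ and hence $\s(b)\in f^{-1}(a)$; conversely, writing $h\in H^+$ as $h=\rho(\s)=\s(b)b^{-1}$ via the isomorphism $\rho\colon Aut(\mathcal Y/K\l\U^{\d_x}\r)\to H^+$ from (II)' (equivalently Claims 1 and 2 in the proof of Proposition 5.3 of \cite{LSN}) gives $hb=\s(b)$, so every point of $H^+b$ has the form $\s(b)$. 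Thus $f^{-1}(a)=\{\s(b):\s\in Aut(\mathcal Y/K\l\U^{\d_x}\r)\}$.

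The isolation claim is then immediate. The formula $\phi(y)\colon\ y\in\mathcal Y\wedge f(y)=a$ is over $K$ (indeed over $K^{\d_x}$, as $a\in K^{\d_x}$) and is realized by $b$; by the previous paragraph every realization is $\s(b)$ for some $\s$ induced by an automorphism of $(\U,\d_x,\d_t)$ fixing $K\l\U^{\d_x}\r$ pointwise. Since $\U$ is a saturated, strongly homogeneous model of $\DCF_{0,2}$, conjugacy over $K\l\U^{\d_x}\r$ coincides with equality of types over $K\l\U^{\d_x}\r$, so all realizations of $\phi$ have the same type over $K\l\U^{\d_x}\r$; hence $\phi$ isolates $\operatorname{tp}(b/K\l\U^{\d_x}\r)$.

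For the PPV claim, the extension $K\l b\r$ is generated by the solution $b$ by construction, so it remains only to rule out new $\d_x$-constants, i.e.\ to show $K\l b\r^{\d_x}=K^{\d_x}$, the inclusion $\supseteq$ being trivial. Here the plan is a descent argument. Take $c\in K\l b\r^{\d_x}$; then $c\in\U^{\d_x}\subseteq K\l\U^{\d_x}\r$ and $c\in\operatorname{dcl}(Kb)$, so there is a $K$-formula $\psi(y,z)$ with $\psi(b,z)\leftrightarrow z=c$. For every $\s\in Aut(\U/K\l\U^{\d_x}\r)$ we have $\s(c)=c$ (as $c\in\U^{\d_x}$), so applying $\s$ yields $\psi(\s(b),z)\leftrightarrow z=c$; since $\s(b)$ ranges over all of $f^{-1}(a)$, the value $c$ is the unique $z$ with $\psi(y,z)$ at every $y$ in the nonempty $K$-definable set $f^{-1}(a)$. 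Consequently $\{c\}$ is defined over $K$ by $\exists y\,\bigl(y\in\mathcal Y\wedge f(y)=a\wedge\forall z'(\psi(y,z')\leftrightarrow z'=z)\bigr)$, so $c\in\operatorname{dcl}(K)=K$ (the definable closure of the $\Pi$-field $K$ in $\DCF_{0,2}$ being $K$ itself). As $c\in\U^{\d_x}$ we conclude $c\in K\cap\U^{\d_x}=K^{\d_x}$, as required.

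I expect the main obstacle to be bookkeeping rather than conceptual: one must ensure that $Aut(\mathcal Y/K\l\U^{\d_x}\r)$, the intrinsic Galois group $H^+$, the map $f$, and the isomorphism $\rho$ all refer to the \emph{same} objects produced through the interpretation $\omega$ in (I)' and (II)' (and in Lemma \cite{LSN}, Corollary 4.3), and, crucially, that elements of $Aut(\mathcal Y/K\l\U^{\d_x}\r)$ genuinely extend to automorphisms of $(\U,\d_x,\d_t)$ fixing $K\l\U^{\d_x}\r$ pointwise, so that ``conjugate by $\s$'' and ``same type over $K\l\U^{\d_x}\r$'' coincide. Once that identification is in hand, transitivity of the Galois action on $f^{-1}(a)$ drives everything, and the descent of the $\d_x$-constant $c$ to $K$ is the only step that goes genuinely beyond what is already recorded in \cite{LSN}.
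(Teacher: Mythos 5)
Your proof is correct and is essentially the paper's approach: the paper gives no argument for this Fact beyond the remark that it is ``a mixture of Proposition 5.3 of \cite{LSN} and its proof,'' and your two-step argument --- first that the fibre $f^{-1}(a)=H^{+}b$ is exactly the orbit $\{\sigma(b):\sigma\in Aut(\mathcal Y/K\langle\U^{\d_x}\rangle)\}$ via the isomorphism $\rho$ of Claims 1 and 2 of that proposition, then Galois descent of any $\d_x$-constant $c\in K\langle b\rangle$ to $\operatorname{dcl}(K)=K$ --- is precisely the argument behind that citation. One cosmetic point: you do not need saturation or strong homogeneity of $\U$ for the isolation claim, since you have already exhibited every realization of $y\in\mathcal Y\wedge f(y)=a$ as $\sigma(b)$ for a genuine automorphism $\sigma$ of $(\U,\d_x,\d_t)$ fixing $K\langle\U^{\d_x}\rangle$ pointwise, and the direction ``conjugate implies same type'' is automatic.
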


And Theorem 5.4 of \cite{LSN} says:
\begin{corollary} Suppose $(K^{\d_x},\d_t)$ is existentially closed in $(K,\d_t)$ as $\d_t$-fields. Then there is a PPV extension of $K$ for \eqref{lineareq}

\end{corollary}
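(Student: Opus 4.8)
The plan is to reproduce, in the parameterized setting, the single-derivation existence argument recalled above for Theorem 1.3 of \cite{KP}: I would evaluate the extended classifying map $F=\omega(f)$ at the identity of $GL_n$ to produce a $K$-rational point of the object space $\mathcal O(\U)$, and then use the hypothesis that $(K^{\d_x},\d_t)$ is existentially closed in $(K,\d_t)$ to descend this to a $K^{\d_x}$-point of $\mathcal O$, which by the criterion from step (II)$'$ yields a PPV extension.

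In more detail, let $f\colon\mathcal Y\to\mathcal O$ be the $K$-definable classifying map constructed in step (II)$'$ (\cite{LSN}, Proposition 5.3), and let $F=\omega(f)\colon GL_n(\U)\to\mathcal O(\U)$ be its image under the interpretation $\omega$ of step (I)$'$ (\cite{LSN}, Corollary 4.3), a $\d_t$-$K$-definable surjection. The identity $I_n$ lies in $GL_n(K)$, and since $F$ is defined over $K$ we have $F(I_n)\in\mathcal O(\U)(K)$; thus $\mathcal O(\U)$ has a $K$-rational point. Note that this $K$-point need not lie in the $\d_x$-constants, so it does not yet give a point of $\mathcal O$ itself.

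Now $\mathcal O$ is, up to $K$-definable bijection, identified with a subset of some Cartesian power $(\U^{\d_x})^m$, and $\mathcal O(\U)=\omega(\mathcal O)$ satisfies $\mathcal O=\mathcal O(\U)\cap(\U^{\d_x})^m$. The delicate point is that \emph{$\mathcal O(\U)$ is $\d_t$-definable over $K^{\d_x}$}, and not merely over $K$. Granting this, the condition ``$\bar x\in\mathcal O(\U)$'' is, by quantifier elimination for $DCF_0$ in the single derivation $\d_t$, quantifier-free over $K^{\d_x}$, so that ``$\exists\,\bar x\,(\bar x\in\mathcal O(\U))$'' is an existential $\d_t$-sentence with parameters from $K^{\d_x}$. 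This sentence holds in $(K,\d_t)$, witnessed by $F(I_n)$; since $(K^{\d_x},\d_t)$ is existentially closed in $(K,\d_t)$, it holds in $(K^{\d_x},\d_t)$ as well. Hence there is $a\in\mathcal O(\U)\cap(K^{\d_x})^m=\mathcal O(K^{\d_x})$, the containment in $\mathcal O$ being automatic since $K^{\d_x}\subseteq\U^{\d_x}$.

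Finally, as $f$ maps $\mathcal Y$ onto $\mathcal O$, I would choose $b\in\mathcal Y$ with $f(b)=a\in K^{\d_x}$; by the criterion of step (II)$'$ (\cite{LSN}, Proposition 5.3), $K\langle b\rangle$ is then a PPV extension of $K$ for \eqref{lineareq}. The main obstacle is the emphasized claim that $\mathcal O(\U)$ is defined over $K^{\d_x}$ rather than only over $K$: the interpretation $\omega$ is set up via formulas over $K$, so one must verify that the object space $\mathcal O$, built as the orbit space $\mathcal Y/H^+$ landing in the $\d_x$-constants, admits a defining family over $K^{\d_x}$. This is exactly where the careful choice of the language $L_{\Pi,K}$ and the structure of the intrinsic Galois group $H^+$ from \cite{LSN} are needed, and it is the step that does not transfer verbatim from the one-derivation case.
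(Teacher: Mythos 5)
Your proposal is correct and is essentially the paper's own route: the paper disposes of this corollary by citing Theorem 5.4 of \cite{LSN}, whose proof is precisely the argument you reconstruct (the parameterized version of the proof of Theorem 1.3 of \cite{KP} sketched in Remark 5.2(iii)), namely evaluating $F=\omega(f)$ at the identity of $GL_n$ to get a point of ${\mathcal O}(K)$ and then descending to ${\mathcal O}(K^{\d_x})$ by existential closedness, followed by Fact 5.4. The delicate point you flag --- that $\mathcal O$ is (quantifier-free) $\d_t$-definable over $K^{\d_x}$ rather than merely over $K$ --- is genuine but holds, being exactly the content of the paper's Lemma 5.8 (via stable embeddedness of $\U^{\d_x}$ in $(\U,\d_x,\d_t)$ together with $dcl(K)\cap \U^{\d_x}=K^{\d_x}$, as in the proof of Lemma 5.7(iii)), so your argument goes through as written.
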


Again this holds and is stated in \cite{LSN} for the more general situation of $\d_x$-log differential equations over $K$ on an algebraic group $G$ defined over $K^{\d_x}$.  This reproves and generalizes the main results  of \cite{GGO}. 

\vspace{2mm}
\noindent
(III)'  (Galois groupoid.)  This goes through identically to (III) except that now the Galois groupoid is (quantifier-free) definable  over $K^{\d_x}$ in the differentially closed $\d_t$-field $(\U, \d_t)$.   Here are some details. We feel free to identify $H^{+}$ acting by left multiplication on ${\mathcal Y}$ with $Aut({\mathcal Y}/K\langle\U^{\d_x}\rangle)$.  

We start with the obvious:
\begin{fact} ${\mathcal Y}$ is a left coset of $GL(n,\U^{\d_x})$ in $GL(n,\U)$, namely of the form $bGL(n,\U^{\d_x})$ for some/any $b\in {\mathcal Y}$.  In particular for  all $b_{1},b_{2}\in {\mathcal Y}$, $b_{1}^{-1}b_{2} \in GL(n,\U^{\d_x})$. 
\end{fact}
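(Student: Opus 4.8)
The plan is to reduce the entire statement to the basic Leibniz-rule computation for the derivation $\d_x$, acting entrywise on matrices over $\U$. First I would note that $\mathcal Y$ is nonempty: since $(\U,\d_x,\d_t)\models DCF_{0,2}$, the linear equation \eqref{lineareq} has a fundamental solution matrix $b\in GL(n,\U)$, i.e. $\d_x(b)=Ab$. Fix such a $b$ once and for all.

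The key step is to show that for any $b'\in\mathcal Y$ the matrix $b^{-1}b'$ lies in $GL(n,\U^{\d_x})$. For this I would first record the formula for the $\d_x$-derivative of an inverse: differentiating $b\cdot b^{-1}=I$ and using the matrix Leibniz rule gives $\d_x(b^{-1})=-b^{-1}\,\d_x(b)\,b^{-1}=-b^{-1}A$. A direct computation then yields
$$\d_x(b^{-1}b')=\d_x(b^{-1})\,b'+b^{-1}\,\d_x(b')=-b^{-1}Ab'+b^{-1}Ab'=0,$$
so every entry of $b^{-1}b'$ is a $\d_x$-constant; since $b^{-1}b'$ is invertible (being a product of invertible matrices) it belongs to $GL(n,\U^{\d_x})$. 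This already gives the ``in particular'' clause, taking $b=b_1$ and $b'=b_2$.

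For the reverse inclusion I would check that left-multiplying $b$ by any $C\in GL(n,\U^{\d_x})$ stays inside $\mathcal Y$: since $\d_x(C)=0$, the Leibniz rule gives $\d_x(bC)=\d_x(b)\,C+b\,\d_x(C)=AbC=A(bC)$, so $bC\in\mathcal Y$. Combining the two inclusions yields $\mathcal Y=b\,GL(n,\U^{\d_x})$, a left coset as claimed, and the independence of the coset from the choice of $b$ is immediate from the first step.

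There is no genuine obstacle here, as the whole statement is a formal consequence of the product rule; the only points deserving a word of care are that $\d_x$ acts on $Mat_n(\U)$ satisfying the Leibniz rule for matrix multiplication (which holds because the entries of a product are sums of products of entries), and that the resulting constant matrix is genuinely invertible, so that it lands in $GL(n,\U^{\d_x})$ and not merely in $Mat_n(\U^{\d_x})$.
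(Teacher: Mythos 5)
Your proof is correct, and it is exactly the standard Wronskian/Leibniz-rule argument that the paper has in mind when it labels this fact ``the obvious'' and states it without proof: compute $\d_x(b^{-1}b')=0$ for $b,b'\in\mathcal Y$ to get one inclusion, and $\d_x(bC)=A(bC)$ for $C\in GL(n,\U^{\d_x})$ for the other. Your additional care about nonemptiness of $\mathcal Y$ (a fundamental solution matrix exists since $(\U,\d_x,\d_t)\models DCF_{0,2}$) and about invertibility of the constant matrix $b^{-1}b'$ fills in precisely the details the paper suppresses.
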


Now, for $a\in {\mathcal O}$, note that ${\mathcal Y}_{a} = f^{-1}(a)$ is precisely an orbit under left multiplication by $H^{+}$, i.e. of the form $H^{+}b$ for some/any $b\in {\mathcal Y}_{a}$. 

Exactly as in Definition 4.1 and Remark 4.2 of \cite{KP} we have:

(*)    For $a_{1}, a_{2}\in {\mathcal O}$, define $H_{a_{1},a_{2}} = \{b_{1}b_{2}^{-1}: b_{1}\in {\mathcal Y}_{a_{1}}, b_{2}\in 
{\mathcal Y}_{a_{2}}\}$, which as in Remark 4.2 of \cite{KP} equals $\{b_{1}^{-1}b: b\in {\mathcal Y}_{a_{2}}\} =
\{b^{-1}b_{2}: b\in {\mathcal Y}_{a_{1}}\}$ for any fixed $b_{1}\in {\mathcal Y}_{a_{1}}$ and $b_{2}\in {\mathcal Y}_{a_{2}}$.

 With this notation we have:
\begin{lemma} (i)  $H_{a_{1}, a_{2}}$ is a subset of $GL(n,\U^{\d_x})$ (uniformly) definable over $K^{\d_x}$, $a_{1}, a_{2}$.
\newline
(ii) For any $c\in H_{a_{1},a_{2}}$, right multiplication by $c$ gives a bijection between ${\mathcal Y}_{a_{1}}$ and ${\mathcal Y}_{a_{2}}$.
\newline
(iii) For $a\in {\mathcal O}$, $H_{a}$ (which is by definition $H_{a,a}$) is a $\d_t$-algebraic subgroup of $GL(n,\U^{\d_x})$, and is the ``usual" Galois group of \eqref{lineareq}.  For any $b\in {\mathcal Y}$, $bH_{a} = {\mathcal Y}_{a} = H^{+}b$, so in particular $bH_{a}b^{-1} = H^{+}$.
\newline
(iv) For $a\in {\mathcal O}$, and any $b\in {\mathcal Y}_{a}$, the map taking $\sigma$ to $b^{-1}\sigma(b)$ is an isomorphism of groups between $Aut({\mathcal Y}/K\langle\U^{\d_x}\rangle)$ and $H_{a}$. 
\newline
(v) $H_{a_{1},a_{2}}\cdot H_{a_{2},a_{3}} = H_{a_{1},a_{3}}$. In particular $H_{a_{1},a_{2}}$ is a right coset (left torsor) of $H_{a_{1}}$ and a left coset (right torsor) of $H_{a_{2}}$.

\end{lemma}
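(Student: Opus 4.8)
The plan is to prove all five parts by direct adaptation of Definition 4.1, Remark 4.2 and Lemma 4.3 of \cite{KP}, the only genuinely new ingredient being that definability is now to be read inside the differentially closed $\d_t$-field $(\U,\d_t)$ rather than inside an algebraically closed field. Throughout I will lean on two bookkeeping identities. First, by the orbit description $\mathcal Y_a = H^{+}b$ for any $b\in\mathcal Y_a$, together with $(*)$, for fixed $b_1\in\mathcal Y_{a_1}$ one has $H_{a_1,a_2}=b_1^{-1}\mathcal Y_{a_2}$ (and symmetrically $H_{a_1,a_2}=\{b^{-1}b_2: b\in\mathcal Y_{a_1}\}$ for fixed $b_2\in\mathcal Y_{a_2}$); this is checked by writing a general element of $\mathcal Y_{a_i}$ as $h b_i$ with $h\in H^{+}$. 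Second, by the Fact preceding the lemma, $b^{-1}b'\in GL(n,\U^{\d_x})$ for all $b,b'\in\mathcal Y$, so every ratio appearing below is automatically a $\d_x$-constant.

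For (i), containment $H_{a_1,a_2}\subseteq GL(n,\U^{\d_x})$ is immediate from the second identity. For definability, the fibres $\mathcal Y_{a_i}=f^{-1}(a_i)$ are uniformly definable over $K,a_i$ in $(\U,\d_x,\d_t)$ (since $f$ is $K$-definable) and $(b_1,b_2)\mapsto b_1^{-1}b_2$ is $\emptyset$-definable, so $H_{a_1,a_2}$ is uniformly definable over $K,a_1,a_2$ in $(\U,\d_x,\d_t)$. Since it is contained in the $\d_x$-constants, I then invoke stable embeddedness of $\U^{\d_x}$ in $DCF_{0,2}$, whose induced structure is that of a pure differentially closed $\d_t$-field (this is exactly the content underlying the interpretation of (I)$'$), to descend the parameters and conclude uniform definability over $K^{\d_x},a_1,a_2$ in $(\U^{\d_x},\d_t)$. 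This descent of parameters is the one step I expect to require genuine care; the remaining parts are formal.

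For (ii), fix $c=b_1^{-1}b_2\in H_{a_1,a_2}$. For $b=hb_1\in\mathcal Y_{a_1}=H^{+}b_1$ one computes $bc=hb_1b_1^{-1}b_2=hb_2\in H^{+}b_2=\mathcal Y_{a_2}$, so right multiplication by $c$ maps $\mathcal Y_{a_1}$ into $\mathcal Y_{a_2}$; it is injective as $c$ is invertible, and surjective since right multiplication by $c^{-1}\in H_{a_2,a_1}$ provides the inverse. Part (iii) follows by taking $a_1=a_2=a$: from the first identity $H_a=b^{-1}\mathcal Y_a=b^{-1}H^{+}b$, a conjugate of the group $H^{+}$ and hence a subgroup, which is $\d_t$-algebraic over $K^{\d_x},a$ inside $GL(n,\U^{\d_x})$ by (i); and $bH_a=\mathcal Y_a=H^{+}b$ gives $bH_ab^{-1}=H^{+}$. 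For (iv), well-definedness into $H_a$ uses that $\s$ fixes both $K$ and $a=f(b)\in\U^{\d_x}$, so $f(\s(b))=\s(f(b))=\s(a)=a$, whence $\s(b)\in\mathcal Y_a$ and $b^{-1}\s(b)\in b^{-1}\mathcal Y_a=H_a$. The homomorphism property is the crucial computation: writing $c_\tau=b^{-1}\tau(b)\in GL(n,\U^{\d_x})$ and using that $\s$ fixes $\U^{\d_x}$ pointwise, $\s\tau(b)=\s(b\,c_\tau)=\s(b)c_\tau$, so $b^{-1}\s\tau(b)=(b^{-1}\s(b))(b^{-1}\tau(b))$. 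Injectivity holds because $\s$ is determined on $\mathcal Y=b\,GL(n,\U^{\d_x})$ by $\s(b)$, and surjectivity is transitivity of $H^{+}$ on $\mathcal Y_a$ read through the isomorphism $\rho$ of (II)$'$.

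For (v), a general element of $H_{a_1,a_2}H_{a_2,a_3}$ is $b_1^{-1}b_2\cdot b_2'^{-1}b_3$ with $b_2,b_2'\in\mathcal Y_{a_2}$; since $b_2b_2'^{-1}\in H^{+}$ (a left ratio within one fibre) and $H^{+}b_3=\mathcal Y_{a_3}$, this equals $b_1^{-1}(b_2b_2'^{-1}b_3)\in b_1^{-1}\mathcal Y_{a_3}=H_{a_1,a_3}$, while conversely $b_1^{-1}b_3=(b_1^{-1}b_2)(b_2^{-1}b_3)$ for any chosen $b_2\in\mathcal Y_{a_2}$, giving $H_{a_1,a_2}H_{a_2,a_3}=H_{a_1,a_3}$. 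Specializing one index yields $H_{a_1}H_{a_1,a_2}=H_{a_1,a_2}=H_{a_1,a_2}H_{a_2}$, and together with (ii) and the group structure from (iii) this exhibits $H_{a_1,a_2}$ as a (left) $H_{a_1}$-torsor and a (right) $H_{a_2}$-torsor, as claimed. Apart from the definability transfer in (i), every step is bookkeeping with the two identities above, exactly paralleling \cite{KP}.
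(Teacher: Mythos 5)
Your proposal is correct and follows essentially the same route as the paper, which simply cites Fact 5.6 for (i), the identities in (*) for (ii) and (v), the conjugation computation $H_a=b^{-1}H^{+}b$ for (iii), and dismisses (iv)--(v) as routine; your cocycle computation for (iv) and your parameter-descent via stable embeddedness of $(\U^{\d_x},\d_t)$ (with $\operatorname{dcl}(K^{\d_x})\cap\U^{\d_x}=K^{\d_x}$) are exactly the details the paper invokes in its proof of (iii) and leaves to the reader elsewhere. One cosmetic note: you work throughout with $H_{a_1,a_2}=\{b_1^{-1}b_2\}$, which is the intended definition (consistent with (III) and Lemma 5.11(iv)); the displayed $b_1b_2^{-1}$ in the paper's (*) is evidently a typo, so your usage is the right one.
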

\begin{proof}
(i) is Fact 5.6.
\newline
(ii) follows from (*).
\newline
(iii)  $H_{a}$ is clearly closed under inverses and and by (*) the product of two elements in $H_{a}$ has the form $b_{1}b_{2}^{-1}b_{2}b_{3}^{-1} = b_{1}b_{3}^{-1} \in H_{a}$  (where $b_{1}, b_{2}, b_{3}\in H_{a}$). 
Now $H_{a}$ is a subgroup of $GL(n,\U^{\d_x})$ and is definable over $K$ in $(\U,\d_x,\d_t)$.  As $(\U^{\d_x},\d_t)$ with the induced structure from $(\U,\d_x,\d_t)$ is just a $\d_t$-differentially closed field, and the definable closure of $K^{\d_x}$ in 
$\U^{\d_x}$ is $K^{\d_x}$, we obtain  the first sentence of (iii).  The rest is a simple computation using the definitition of $H^{+}$ as well as Fact 5.4 (that all elements of ${\mathcal Y}_{a}$ have the same type over $K\langle\U^{\d_x}\rangle$). 
\newline
Both (iv) and (v) are routine (and well-known). 
\end{proof}

The Galois groupoid attached to \eqref{lineareq} has ${\mathcal O}$ as its set of objects, and for $a_{1}, a_{2}\in {\mathcal O}$, $Mor(a_{1},a_{2})$ is precisely $H_{a_{1}, a_{2}}$.  Composition of morphisms is just multiplication in $GL(n,\U^{\d_x})$.  We call this groupoid ${\mathcal G}$.  We see from Lemma 5.7 that:

\begin{lemma} ${\mathcal G}$ is (quantifier-free) definable over $K^{\d_x}$ in the $\d_t$-differentially closed field $(\U^{\d_x},\d_t)$.  It is moreover connected (namely for each $a_{1}, a_{2}\in {\mathcal O}$, $Mor(a_{1},a_{2}) \neq \emptyset$). 
\end{lemma}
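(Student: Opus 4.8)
The plan is to separate the two assertions: connectedness is essentially formal, while the definability claim carries all the content. For connectedness I would simply unwind the definitions. Each object $a\in {\mathcal O}$ is by construction an $H^{+}$-orbit, so the fibre ${\mathcal Y}_{a}=f^{-1}(a)$ is nonempty; given $a_{1},a_{2}\in {\mathcal O}$, picking any $b_{1}\in {\mathcal Y}_{a_{1}}$ and $b_{2}\in {\mathcal Y}_{a_{2}}$ yields $b_{1}b_{2}^{-1}\in Mor(a_{1},a_{2})=H_{a_{1},a_{2}}$. Hence $Mor(a_{1},a_{2})\neq\emptyset$ for all $a_{1},a_{2}$, which is exactly connectedness, and no further input is needed.

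For the definability statement the structural input is the fact, already used in the proof of Lemma 5.7(iii), that the $\d_x$-constants $\U^{\d_x}$ carry, as induced structure from $(\U,\d_x,\d_t)$, exactly the $\d_t$-differentially closed field $(\U^{\d_x},\d_t)$, and that this set is stably embedded (as is standard for the constants of a differentially closed field). All the data of ${\mathcal G}$ lie in Cartesian powers of $GL(n,\U^{\d_x})$: by the construction in (II)$'$ the object set ${\mathcal O}$ is a $K$-definable subset of a power of $\U^{\d_x}$; by Lemma 5.7(i) the morphism sets $H_{a_{1},a_{2}}$ are subsets of $GL(n,\U^{\d_x})$ uniformly definable over $K^{\d_x},a_{1},a_{2}$; and composition is matrix multiplication, which is $\emptyset$-definable. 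Stable embeddedness then makes each of these sets definable in $(\U^{\d_x},\d_t)$ over parameters from $\U^{\d_x}$, and quantifier elimination for $DCF_0$ upgrades this to quantifier-free definability.

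The main obstacle is to descend the parameters from $K$ to $K^{\d_x}$, that is, to show that ${\mathcal O}$ (a priori only $K$-definable) is definable over the smaller field $K^{\d_x}$; the family $H_{a_{1},a_{2}}$ is already handled by Lemma 5.7(i). Here I would invoke the classical linear disjointness of constants: $\U^{\d_x}$ is linearly disjoint from $K$ over $K^{\d_x}=K\cap\U^{\d_x}$ (for the derivation $\d_x$). This lets me extend any $\tau\in Aut((\U^{\d_x},\d_t)/K^{\d_x})$ to an element of $Aut((\U,\d_x,\d_t)/K)$: put the identity on $K$ and $\tau$ on $\U^{\d_x}$, which is well-defined on the compositum $K\cdot\U^{\d_x}$ by linear disjointness, commutes with $\d_x$ and $\d_t$ by a Leibniz computation (using that $\d_x$ vanishes on $\U^{\d_x}$), and extends to $\U$ by homogeneity. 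Consequently every $K$-definable subset of a power of $\U^{\d_x}$ — in particular ${\mathcal O}$ — is invariant under $Aut((\U^{\d_x},\d_t)/K^{\d_x})$; since $\mathrm{dcl}(K^{\d_x})=K^{\d_x}$ in the differential field $K^{\d_x}$, its canonical code lies in $K^{\d_x}$ and the set is definable over $K^{\d_x}$. Combining this with the uniform $K^{\d_x}$-definability of the $H_{a_{1},a_{2}}$ and the trivial definability of composition shows that the whole groupoid ${\mathcal G}$ is (quantifier-free) definable over $K^{\d_x}$ in $(\U^{\d_x},\d_t)$, as required.
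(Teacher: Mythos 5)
Your proof of connectedness is correct and is exactly what the paper relies on: each $a\in{\mathcal O}$ is an $H^{+}$-orbit, so every fibre ${\mathcal Y}_{a}$ is nonempty, and any $b_{1}\in{\mathcal Y}_{a_{1}}$, $b_{2}\in{\mathcal Y}_{a_{2}}$ produce an element of $H_{a_{1},a_{2}}=Mor(a_{1},a_{2})$. Your overall definability strategy is likewise in the spirit of the paper, which deduces the lemma from Lemma 5.7 together with the observations (already used in the proof of Lemma 5.7(iii)) that the induced structure on $\U^{\d_x}$ is that of a $\d_t$-differentially closed field and that the definable closure of $K^{\d_x}$ in $\U^{\d_x}$ is $K^{\d_x}$.

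There is, however, one step that does not work as written: after extending $\tau\in Aut((\U^{\d_x},\d_t)/K^{\d_x})$ to the compositum $K\cdot\U^{\d_x}$ (which is fine --- linear disjointness makes the map well defined, and quantifier elimination for $DCF_{0,2}$ makes any isomorphism of differential subfields partial elementary), you extend it ``to $\U$ by homogeneity''. Homogeneity and saturation only extend partial elementary maps with \emph{small} domain, whereas $\U^{\d_x}$, and hence the compositum, has the same cardinality as $\U$. The fact you need is true, but it is a theorem rather than an instance of homogeneity: in a stable theory every elementary permutation of a stably embedded definable set extends to an automorphism of the monster model (proved by a back-and-forth using definability of types; see the appendix of Chatzidakis--Hrushovski on difference fields). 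Better still, the extension direction can be avoided entirely by restricting automorphisms downward instead of extending them upward, which is the cheaper route and the one implicit in the paper: ${\mathcal O}$ is a $K$-definable subset of a power of $\U^{\d_x}$; by stable embeddedness it is definable in $(\U^{\d_x},\d_t)$ with parameters, and by elimination of imaginaries for $DCF_{0}$ it has a canonical parameter $c$ which is a tuple of constants. Every $\sigma\in Aut(\U/K)$ preserves ${\mathcal O}$ setwise and restricts to an automorphism of $(\U^{\d_x},\d_t)$, hence fixes $c$; therefore $c\in \mathrm{dcl}_{(\U,\Pi)}(K)=K$, and being a tuple of constants, $c\in K\cap \U^{\d_x}=K^{\d_x}$. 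This gives $K^{\d_x}$-definability of ${\mathcal O}$ directly, and quantifier elimination for $DCF_{0}$ then yields the quantifier-free form, as you say; the morphism sets and composition are handled exactly as in your proposal via Lemma 5.7(i).
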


Now for any $\d_t$-differential field $(L,\d_t)$ (e.g. differential subfield of $(\U^{\d_x}, \d_t)$) containing $K^{\d_x}$, we can, by Lemma 5.8, consider the groupoid ${\mathcal G}(L)$, the set of objects ${\mathcal O}(L)$ being the interpretation in $L$ of the quantifier-free over $K^{\d_x}$-formula which defines ${\mathcal O}$ in $(U^{\d_x}, \d_t)$, and likewise for $H_{a_{1},a_{2}}(L)$ for $a_{1}, a_{2}\in {\mathcal O}(L)$ (which now may be empty).  Nevertheless it is easy to see that  ${\mathcal G}(L)$ is a groupoid, although possibly has more than one connected component.

\begin{lemma} The set of PPV extensions of $K$ (up to isomorphism over $K$ as $\Pi$-fields) for the equation \eqref{lineareq} is parameterized by, or in natural one-one correspondence with, the set of connected components of the groupoid  ${\mathcal G}(K^{\d_x})$.
\end{lemma}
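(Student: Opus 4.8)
The plan is to establish a bijection between PPV extensions of $K$ for \eqref{lineareq} (up to $\Pi$-isomorphism over $K$) and connected components of $\mathcal G(K^{\d_x})$, in direct analogy with Proposition 4.6 of \cite{KP}. The correspondence should be mediated by the function $f:\mathcal Y\to\mathcal O$ from (II)$'$ together with Fact 5.5 and Lemma 5.7. First I would set up the map from objects to extensions: given $a\in\mathcal O(K^{\d_x})$, choose any $b\in\mathcal Y_a$ and associate to $a$ the $\Pi$-field $K\langle b\rangle$. By Fact 5.5 this is a PPV extension of $K$, since $f(b)=a\in K^{\d_x}$. The key point is that this assignment is well-defined on connected components: if $a_1,a_2\in\mathcal O(K^{\d_x})$ lie in the same connected component, then $H_{a_1,a_2}(K^{\d_x})\neq\emptyset$, so there is $c\in H_{a_1,a_2}$ with entries in $GL(n,\U^{\d_x})$ and definable over $K^{\d_x}$; by Lemma 5.7(ii) right multiplication by $c$ sends $\mathcal Y_{a_1}$ to $\mathcal Y_{a_2}$, and because $c$ is rational over $K^{\d_x}$ one checks that $b\mapsto bc$ extends to a $\Pi$-isomorphism $K\langle b\rangle\to K\langle bc\rangle$ fixing $K$. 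Conversely, if $b_1\in\mathcal Y_{a_1}$ and $b_2\in\mathcal Y_{a_2}$ generate $K$-isomorphic PPV extensions, the isomorphism is realized (after identifying with permutations induced by $\Pi$-automorphisms of $\U$) by right multiplication by some element of $H_{a_1,a_2}$, forcing that morphism set to contain a $K^{\d_x}$-point, i.e. $a_1,a_2$ are in the same component.

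The crucial technical input is Fact 5.5, which tells us that the isomorphism type of $K\langle b\rangle$ over $K$ is completely controlled by the type of $b$ over $K\langle\U^{\d_x}\rangle$, and that this type is isolated by the formula $y\in\mathcal Y\wedge f(y)=a$. This is what lets us translate between the ``concrete'' data of generators $b$ and the ``groupoid'' data of objects $a$ and morphisms in $H_{a_1,a_2}$: two solutions $b_1,b_2$ with $f(b_1),f(b_2)\in K^{\d_x}$ yield $K$-isomorphic PPV extensions precisely when there is a $\Pi$-automorphism of $\U$ over $K$ carrying $b_1$ to $b_2\cdot d$ for some $d\in GL(n,\U^{\d_x})$ rational over $K^{\d_x}$, and by the definition of $H_{f(b_1),f(b_2)}$ in (*) this $d$ must lie in $H_{f(b_1),f(b_2)}(K^{\d_x})$. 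I would make this explicit by invoking Lemma 5.7(v), which says $H_{a_1,a_2}$ is a torsor for $H_{a_1}$ on one side and $H_{a_2}$ on the other, so that the existence of a single $K^{\d_x}$-morphism between $a_1$ and $a_2$ is exactly the assertion that they are connected in $\mathcal G(K^{\d_x})$.

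The main obstacle I anticipate is verifying that the $K^{\d_x}$-rationality of a morphism $c\in H_{a_1,a_2}$ genuinely upgrades to an isomorphism of $\Pi$-fields over $K$ (not merely a bijection of solution sets, and not merely over $K\langle\U^{\d_x}\rangle$). The permutation $b\mapsto bc$ of $\mathcal Y$ is a priori only a map of solution sets; to see it is induced by an abstract $\Pi$-isomorphism $K\langle b\rangle\to K\langle bc\rangle$ one needs that $b$ and $bc$ realize the same type over $K$, which follows from Fact 5.5 once one checks $f(bc)=f(b)\cdot(\text{the appropriate object})=a_2$ and that $c$ being $K^{\d_x}$-rational keeps everything defined over $K$ rather than over a larger field. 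The delicate part is the bookkeeping that the constants are genuinely preserved, i.e. that $K\langle bc\rangle^{\d_x}=K^{\d_x}$, so that the image really is a PPV extension; this is where the condition $a_2\in\mathcal O(K^{\d_x})$ and the fact that $\mathcal O$ embeds in a power of $\U^{\d_x}$ are used. Once these points are settled the two assignments are readily seen to be mutually inverse on the level of isomorphism classes and connected components, completing the proof.
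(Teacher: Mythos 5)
Your proposal follows essentially the same route as the paper's proof: associate to $a\in{\mathcal O}(K^{\d_x})$ the extension $K\langle b\rangle$ for any $b\in {\mathcal Y}_a$ (your ``Fact 5.5'' is the paper's Fact 5.4), and translate $K$-isomorphism of PPV extensions into nonemptiness of $Mor(a_1,a_2)(K^{\d_x})$ by extending the isomorphism to a $\Pi$-automorphism of $\U$ over $K$ and using $L^{\d_x}=K^{\d_x}$ together with Fact 5.6; this is exactly the paper's argument. The ``constants bookkeeping'' you single out as delicate is in fact already packaged into Fact 5.4: whenever $f(b)\in K^{\d_x}$, the field $K\langle b\rangle$ is a PPV extension, so no separate verification that $K\langle bc\rangle^{\d_x}=K^{\d_x}$ is needed.

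One intermediate claim in your write-up is false as stated, though the repair makes the argument \emph{easier}, not harder. You assert that for $c\in H_{a_1,a_2}(K^{\d_x})$ the map $b\mapsto bc$ extends to a $\Pi$-isomorphism $K\langle b\rangle\to K\langle bc\rangle$ over $K$, ``because $b$ and $bc$ realize the same type over $K$, which follows from Fact 5.5.'' When $a_1\neq a_2$ this is impossible: $f$ is $K$-definable and $a_1,a_2$ are tuples from $K^{\d_x}\subseteq K$, so $f(b)=a_1$ and $f(bc)=a_2$ are part of the respective types over $K$, whence $tp(b/K)\neq tp(bc/K)$; Fact 5.4 gives a common type only for points of a \emph{single} fibre ${\mathcal Y}_a$. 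The correct observation is simpler: since $c\in GL(n,K^{\d_x})\subseteq GL(n,K)$, one has $K\langle bc\rangle=K\langle b\rangle$ as $\Pi$-fields outright (no isomorphism moving $b$ to $bc$ is needed, and none exists). Well-definedness of your map on components then follows by comparing $bc$ with an arbitrary representative $b'\in{\mathcal Y}_{a_2}$ via Fact 5.4, which is precisely how the paper argues with its auxiliary points $b_1',b_2'$: there one finds $b_i'\in{\mathcal Y}_{a_i}$ with $b_2'^{-1}b_1'\in GL(n,K^{\d_x})$, concludes $K\langle b_1'\rangle=K\langle b_2'\rangle$, and transfers back to $b_1,b_2$ by equality of types within each fibre. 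With this local correction your two directions coincide with the paper's proof.
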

\begin{proof} Every PPV extension of $K$ for \eqref{lineareq} is generated over $K$ by some $b\in {\mathcal Y}$ and clearly $f(b)\in {\mathcal O}(K^{\d_x})$. And conversely if $b\in {\mathcal Y}$ and $f(b)\in {\mathcal O}(K^{\d_x})$ then by Fact 5.4,  $K\langle b\rangle$ is a PPV extension of $K$. 

Now suppose that $b_{1}, b_{2}\in {\mathcal Y}$, $f(b_{i})\in {\mathcal O}(K^{\d_x})$, and that $K\langle b_{1}\rangle$ and $K\langle b_{2}\rangle$ are isomorphic over $K$ as $\Pi$-fields.. After applying an automorphism of $(\U,\Pi)$ which fixes pointwise $K$  (so fixes pointwise ${\mathcal G}(K^{\d_x})$)  we may assume that $K\langle b_{1}\rangle = K\langle b_{2}\rangle = L$ say.
So working in $GL(n,\U)$, $b_{2}^{-1}b_{1}\in  GL(n,L^{\d_x}) = GL(n,K^{\d_x})$, whereby $Mor(a_{1},a_{2})$ in ${\mathcal G}(K^{\d_x})$ is nonempty. 

Conversely, suppose that $f(b_{i}) = a_{i}\in {\mathcal O}(K^{\d_x})$ for $i=1,2$ and $Mor(a_{1},a_{2})$ is nonempty in ${\mathcal G}(K^{\d_x})$.  So there exist $b_{i}' \in {\mathcal Y}_{a_{i}}$, such that $b_{2}'^{-1}b_{1}'\in GL(n,K^{\d_x})$.  Then $K\langle b_{1}'\rangle = K\langle b_{2}'\rangle$. 
But by Fact 5.4,  $b_{1}$ and $b_{1}'$ have the same type over $K$, whence there is an isomorphism over $K$ between $K\langle b_{1}\rangle$ and $K\langle b_{1}'\rangle$. Likewise there is an isomorphism over $K$ between $K\langle b_{2}\rangle$ and $K\langle b_{2}'\rangle$. Hence $K\langle b_{1}\rangle$ and $K\langle b_{2}\rangle$ are isomorphic over $K$. This concludes the proof.
\end{proof} 

\vspace{2mm}
\noindent
(IV') (Differential Galois cohomology.) We are in the above situation of \eqref{lineareq} and its Galois groupoid $\mathcal G$,  definable over $K^{\d_x}$ in $(\U^{\d_x}, \d_t)$.
\begin{lemma}  Assume that $(K^{\d_x},\d_t)$ is differentially large, and that $K^{\d_x}$ is bounded as a field. Then ${\mathcal G}(K^{\d_x})$ has finitely many connected components.
\end{lemma}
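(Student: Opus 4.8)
The plan is to reduce the statement to the finiteness theorem (Theorem~\ref{mainfinite}), applied inside the single-derivation differentially closed field $(\U^{\d_x},\d_t)$ with base field $K^{\d_x}$. First, if $\mathcal{O}(K^{\d_x})$ is empty then $\mathcal{G}(K^{\d_x})$ has no objects and hence no connected components, so there is nothing to prove; thus I would fix a base object $a_0\in\mathcal{O}(K^{\d_x})$. By Lemma 5.7(iii), $H_{a_0}$ is a $\d_t$-algebraic subgroup of $GL(n,\U^{\d_x})$ defined over $K^{\d_x}$, i.e. a linear differential algebraic group over the $\d_t$-field $K^{\d_x}$. This is exactly the setting to which Theorem~\ref{mainfinite} applies.

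The main step, adapting the proof of Corollary 5.3 of \cite{KP} to the differential algebraic environment, is to produce an injection from the set of connected components of $\mathcal{G}(K^{\d_x})$ into $H^1_{\d_t}(K^{\d_x},H_{a_0})$. For each $a\in\mathcal{O}(K^{\d_x})$, the morphism set $H_{a_0,a}=Mor(a_0,a)$ is, by Lemma 5.7(v), a left torsor for $H_{a_0}$, and by Lemma 5.7(i) it is defined over $K^{\d_x}$ (since both $a_0,a\in\mathcal{O}(K^{\d_x})$). Hence it is a $\d_t$-algebraic torsor for $H_{a_0}$ over $K^{\d_x}$ and determines a class $[H_{a_0,a}]\in H^1_{\d_t}(K^{\d_x},H_{a_0})$. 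I would then check that $a\mapsto[H_{a_0,a}]$ factors through connected components and is injective on them. Given $c\in H_{a,a'}(K^{\d_x})$, right multiplication by $c$ carries $H_{a_0,a}$ onto $H_{a_0,a'}$ (using $H_{a_0,a}\cdot H_{a,a'}=H_{a_0,a'}$ from Lemma 5.7(v)) and commutes with the left $H_{a_0}$-action, so it is an isomorphism of $H_{a_0}$-torsors over $K^{\d_x}$ and the classes agree. Conversely, any isomorphism of the two torsors over $K^{\d_x}$ is given by right multiplication by an element $c$; writing an arbitrary point of $H_{a_0,a}$ as $h b_0$ for $b_0\in H_{a_0,a}$ and $h\in H_{a_0}$ shows $c=b_0^{-1}\phi(b_0)\in H_{a,a'}$, and since $\phi$ is over $K^{\d_x}$ and $K^{\d_x}$ is definably closed in $\U^{\d_x}$, in fact $c\in H_{a,a'}(K^{\d_x})$, witnessing that $a$ and $a'$ lie in the same connected component.

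Finally, since $(K^{\d_x},\d_t)$ is differentially large and $K^{\d_x}$ is bounded as a field, Theorem~\ref{mainfinite}, applied to the linear differential algebraic group $H_{a_0}$ over $K^{\d_x}$ in the single-derivation field $(\U^{\d_x},\d_t)$, gives that $H^1_{\d_t}(K^{\d_x},H_{a_0})$ is finite. The injection constructed above then forces $\mathcal{G}(K^{\d_x})$ to have only finitely many connected components, as required.

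I expect the main obstacle to be the bookkeeping in the middle step, rather than any genuinely new idea: one must verify carefully that $H_{a_0,a}$ really is a differential algebraic $H_{a_0}$-torsor defined over $K^{\d_x}$, and that isomorphisms of such torsors over $K^{\d_x}$ correspond precisely to $K^{\d_x}$-points of the morphism sets $H_{a,a'}$, so that the groupoid-theoretic relation ``same connected component'' matches the cohomological relation ``same class''. Once this dictionary is in place the finiteness is immediate from Theorem~\ref{mainfinite}, which is precisely the reason the finiteness theorem of Section~\ref{finiteH1} was needed for the PPV application.
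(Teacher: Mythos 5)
Your proposal is correct and follows essentially the same route as the paper: the paper's proof also fixes $a\in\mathcal{O}(K^{\d_x})$, invokes the Claim in Corollary 5.3 of \cite{KP} (which is exactly the torsor dictionary you reconstruct, that $Mor(a,b)$ and $Mor(a,c)$ are isomorphic over $K^{\d_x}$ as left $H_a$-torsors iff $Mor(b,c)(K^{\d_x})\neq\emptyset$) to embed the set of connected components of $\mathcal{G}(K^{\d_x})$ into $H^1_{\d_t}(K^{\d_x},H_a)$, and then concludes by Theorem~\ref{mainfinite}. The only difference is that you spell out the details the paper delegates to \cite{KP}, including the definable-closure argument showing the comparison element $c$ lies in $H_{a,a'}(K^{\d_x})$, and these details are accurate.
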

\begin{proof} We may assume that ${\mathcal O}(K^{\d_x}) \neq \emptyset$.  So fix $a\in {\mathcal O}(K^{\d_x})$, and consider $H_{a}$, a $\d_t$-algebraic subgroup of  $GL(n,\U^{\d_x})$ defined over $K^{\d_x}$. 
The Claim in Corollary 5.3 of \cite{KP} goes through with no change, showing that for $b,c\in {\mathcal O}(K^{\d_x})$, $Mor(a,b)$ and $Mor(a,c)$ are isomorphic over $K^{\d_x}$,  as left torsors for $H_{a}$ in the category of $\d_t$-algebraic varieties, if and only if $Mor(b,c)(K^{\d_x})$ is nonempty.  This gives an embedding of the set of connected components of ${\mathcal G}(K^{\d_x})$ into $H^{1}_{\d_t}(K^{\d_x}, H_a)$ (where $K^{\d_x}$ is considered as a $\d_t$-field, and $H_{a}$ as a $\d_t$-algebraic group over $K^{\d_x}$).  So we conclude by Theorem 4.1.

\end{proof}

\vspace{2mm}
\noindent
(V') (End of proof of Theorem 5.1.) We go back to the interpretation $\omega$ of $Th(\U^{\d_x}, {\mathcal Y})$ (with all induced structure from sets definable over $K$ in $(\U,\Pi)$),  in $Th(\U, \d_t)$, using the precise formalism in (I)'.  Remember this gave $(\U^{\d_x}, {\mathcal Y})$ as an elementary substructure of $(\U, GL(n,\U))$ in the common language we called $L_{\Pi,K}$. 

Let $f:{\mathcal Y} \to {\mathcal O}$ be as given in (II)'. As remarked earlier  ${\mathcal O}(\U) = \omega({\mathcal O})$.  Let $F = \omega(f)$ which is a map from $GL(n,\U)$ to ${\mathcal O}(\U)$, which we know to be $K$-definable in the differentially closed field $(\U, \d_t)$. 

Again we write $X_{a}$ for $F^{-1}(a)$, where $a\in {\mathcal O}(\U)$. 

Recall that $H^{+}$ (the intrinsic Galois group of \eqref{lineareq}) is a $\Pi$-definable subgroup of $GL(n,\U)$, and we now let $H$ denote the $\d_t$-Kolchin closure of $H^{+}$, namely the smallest $\d_t$-definable subgroup of $GL(n,\U)$ containing $H^{+}$ (which exists by $\omega$-stability of $Th(\U,\d_t)$). 

We write ${\mathcal G}(\U)$ for $\omega({\mathcal G})$ which makes sense functorially. We write $Mor(\U)$ for $\omega(Mor)$. And we record the analogous facts about the interpretation $\omega$, to  what was said earlier in (V). 

\begin{lemma}
(i) Let $h:{\mathcal Y}\times {\mathcal Y} \to GL(n,\U^{\d_x})$ be $h(x,y) = x^{-1}y$. Then it is the same for $\omega(h): GL(n,\U)\times GL(n,\U) \to GL(n,\U)$.
\newline
(ii) The family of $X_{a}$ for $a\in {\mathcal O}(\U)$ is  precisely the family of right cosets of $H$ in $GL(n,\U)$.  So ${\mathcal O}(\U)$ identifies with $G/H$.
\newline 
(iii)  Let $a\in {\mathcal O}(K^{\d_x})$. Then $X_{a}$ is $K$-irreducible as a $\d_t$-algebraic set defined over $K$. Moreover the PPV extension of $K$ corresponding to $a$ (given by Fact 5.4) is precisely the $\d_t$-function field of $X_{a}$ over $K$, $K\langle X_{a}\rangle$, namely the $\d_t$-subfield of $(\U,\d_t)$ generated over $K$ by a generic point of $X_{a}$. 
\newline
(iv)  For $a_{1}, a_{2}\in {\mathcal O}(\U)$, $Mor(\U)(a_{1},a_{2}) = \{b_{1}^{-1}b_{2}: b_{1}\in X_{a_{1}}, b_{2}\in X_{a_{2}}\}$.

\end{lemma}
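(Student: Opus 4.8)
The plan is to obtain all four parts by transfer through the interpretation $\omega$, in the same spirit as the single-derivation statements recorded in (V), exploiting that $(\U^{\d_x},{\mathcal Y})$ is an elementary substructure of $(\U, GL(n,\U))$ in the common language $L_{\Pi,K}$ (Lemma 5.3). The organizing observation is that, since the relation symbols $R_X$ of $L_{\Pi,K}$ are attached to $\d_t$-subvarieties $X$ whose points in $(\U^{\d_x},{\mathcal Y})$ are Kolchin-dense in $X$, applying $\omega$ to a set defined over $K$ in the small structure amounts to passing to its $\d_t$-Kolchin closure in $(\U,\d_t)$. In particular $\omega(H^{+}) = H$, since $H$ was defined to be exactly the $\d_t$-Kolchin closure of $H^{+}$; I would record this identification first, as it is used in (ii) and (iv).

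For (i), I would observe that the graph of $h(x,y)=x^{-1}y$ on ${\mathcal Y}\times{\mathcal Y}$ is cut out by the purely algebraic group condition $xz=y$, which is expressible in $L_{\Pi,K}$ via the multiplication on $GL(n,\U)$; the $L_{\Pi,K}$-sentence asserting that $h$ assigns to $(x,y)$ the unique $z$ with $xz=y$ holds in $(\U^{\d_x},{\mathcal Y})$, so by elementarity it holds in $(\U, GL(n,\U))$, giving $\omega(h)(x,y)=x^{-1}y$. For (ii), by (II)' the fibres ${\mathcal Y}_a=f^{-1}(a)$ are precisely the orbits $H^{+}b$; transferring the $L_{\Pi,K}^{eq}$-statement ``$f(x)=f(y)$ iff $x\in H^{+}y$'' and using $\omega(H^{+})=H$ shows that the $X_a=F^{-1}(a)$ are exactly the cosets of $H$ in $GL(n,\U)$, so ${\mathcal O}(\U)=\omega({\mathcal O})$ identifies with $G/H$. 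For (iv), I would transfer the identity (*) defining $Mor(a_1,a_2)$ as $\{b_1^{-1}b_2:b_1\in{\mathcal Y}_{a_1},\ b_2\in{\mathcal Y}_{a_2}\}$; combined with (i) (so that the operation $x^{-1}y$ is preserved by $\omega$) and (ii) (identifying the $X_{a_i}$ with the transferred fibres), this yields the stated formula for $Mor(\U)(a_1,a_2)$.

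The main obstacle is (iii), which I expect to require the most care. For $a\in{\mathcal O}(K^{\d_x})$ the fibre ${\mathcal Y}_a$ is defined over $K$, and by Fact 5.4 all its elements share one type over $K\langle\U^{\d_x}\rangle$, hence a fortiori over $K$; transferring, $X_a=\omega({\mathcal Y}_a)$ is the $\d_t$-Kolchin closure of ${\mathcal Y}_a$, and since the elements of ${\mathcal Y}_a$ are Kolchin-dense in $X_a$ and carry a single type over $K$, the set $X_a$ is $K$-irreducible as a $\d_t$-algebraic set over $K$, with every $b\in{\mathcal Y}_a$ a $\d_t$-generic point of $X_a$ over $K$. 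The delicate step is to identify the PPV field $K\langle b\rangle$, which is a priori a $\Pi$-field, with the $\d_t$-function field $K\langle X_a\rangle$: here I would use that $\d_x b=Ab$ with $A\in Mat_n(K)$, so that $\d_x$ acts on $K\langle b\rangle$ in a way already determined over $K$ by the $\d_t$-derivatives of $b$, whence the $\Pi$-field $K\langle b\rangle$ and the $\d_t$-field $K\langle b\rangle=K\langle X_a\rangle$ have the same underlying field. Combining this with the genericity of $b$ established above gives (iii), and with it the full lemma.
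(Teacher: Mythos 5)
Your overall strategy---transfer through the interpretation $\omega$ using the elementarity of $(\U^{\d_x},{\mathcal Y})$ in $(\U,GL(n,\U))$---is the same as the paper's, and your treatments of (iii) and (iv) essentially coincide with it (including the observation that $K\langle b\rangle_{\d_t}$ is already a $\Pi$-field because $\d_x b = Ab$, and that $K$-irreducibility of $X_a$ follows from Fact 5.4 plus density). But your ``organizing observation'' that applying $\omega$ amounts to passing to the $\d_t$-Kolchin closure is false for definable sets in general (e.g.\ $\omega$ of a cofinite subset of ${\mathcal Y}$ is a cofinite, hence non-closed, subset of $GL(n,\U)$; it holds only for the named dense closed sets $R_X$), and both places where you lean on it are exactly where the real content of the lemma lies. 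In (i), the language $L_{\Pi,K}$ contains only symbols $R_X$ for $\d_t$-subvarieties $X$ over $K$ such that $X\cap((\U^{\d_x})^{r}\times{\mathcal Y}^{t})$ is Kolchin-dense in $X$. So the relation ``$xz=y$,'' tautologically interpreted in the big structure, is not available as an $L_{\Pi,K}$-formula until one verifies that the graph $Z$ of $(x,y)\mapsto x^{-1}y$ meets ${\mathcal Y}\times{\mathcal Y}\times GL(n,\U^{\d_x})$ in a Kolchin-dense subset of $Z$---which is precisely the statement being proved, so your elementarity appeal is circular. The paper supplies exactly this missing verification: since ${\mathcal Y}$ is the sharp-point set of a parameterized $D$-structure on $GL_n$ over $K$, it is Kolchin-dense in $GL(n,\U)$, so one can choose $b_1,b_2\in{\mathcal Y}$ that are $\d_t$-generic and independent over $K$; then $(b_1,b_2,b_1^{-1}b_2)$ is a generic point of $Z$ lying in the small sorts (by Fact 5.6, $b_1^{-1}b_2\in GL(n,\U^{\d_x})$), giving the density and hence $\omega(h)(x,y)=x^{-1}y$.

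The second gap is your assertion $\omega(H^{+})=H$ in (ii). $H^{+}$ is not a subset of the sorts of $(\U^{\d_x},{\mathcal Y})$ at all---it only lives in the $eq$-expansion, as the paper itself notes in step (V)---and $H$ is \emph{defined} as the $\d_t$-Kolchin closure of $H^{+}$; that $\omega$ of this imaginary-sort group equals that closure is precisely what needs proof, not a definition. The paper's proof of (ii) circumvents this by conjugating the Galois group into the constants sort, where $\omega$ is well understood (it is just the passage from $(\U^{\d_x},\d_t)$ to its elementary extension $(\U,\d_t)$): fix $b\in{\mathcal Y}$, set $f_1=f\circ h_b^{-1}:GL(n,\U^{\d_x})\to{\mathcal O}$, whose fibres are the right cosets of $H_a$ with $a=f(b)$; transferring within the constants sort, the fibres of $F_1=\omega(f_1)$ are the right cosets of $H_a(\U)$, so by part (i) the fibres of $F$ are the cosets of $bH_a(\U)b^{-1}$, and Lemma 5.7(iii) ($bH_ab^{-1}=H^{+}$) together with taking Kolchin closures identifies $bH_a(\U)b^{-1}$ with $H$. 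Without this conjugation argument (or some other justification of $\omega(H^{+})=H$), your (ii) is incomplete, and with it your (iii) and (iv), which both depend on the coset description of the fibres $X_a$. Note also that, contrary to your expectation, the delicate steps are (i) and (ii); your (iii) is the part that goes through essentially as you wrote it once (ii) is in place.
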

\begin{proof} (i) ${\mathcal Y}$ is the set of sharp points of a parameterized $D$-variety structure $s$ on  $GL(n,\U)$ over $K$. Hence we can find $b_{1}, b_{2} \in {\mathcal Y}$ which are  
$\d_{t}$-generic and independent over $K$  in $GL(n,\U)$.  Then  $b_{1}^{-1}b_{2}\in GL(n,\U^{\d_x})$, and we see that if $Z$ is the graph of the map taking $(x,y)\in GL(n,\U)\times GL(n,\U)$ to 
$x^{-1}y\in GL(n,\U)$, then $Z\cap ({\mathcal Y}\times {\mathcal Y} \times GL(n,\U^{\d_x}))$ is $\d_t$-Kolchin dense in $Z$, which yields that $Z = \omega(graph(h))$ as required. 
\newline
(ii) Fix $b\in {\mathcal Y}$. Let $h_{b}:{\mathcal Y} \to GL(n,\U^{\d_x})$ be left multiplication by $b^{-1}$. Then by part (i), $\omega(h_{b})$ is left multiplication by $b^{-1}$ from $GL(n,
\U)$ to $GL(n,\U)$.   Let $f_{1}$ be the composition $f\circ h_{b}^{-1}: GL(n,\U^{\d_x}) \to {\mathcal O}$.  By Lemma 5.3, $F_{1} :=\omega(f_{1}) = F\circ \omega(h_{b}^{-1}):GL(n,\U) \to {\mathcal O}(\U)$. 
Let $a=f(b)\in {\mathcal O}(\U^{\d_x})$.  Then the fibres of $f_{1}$ are the right cosets of $H_{a}$. All of this being definable in the differentially closed field $(\U^{\d_x}, \d_t)$, the fibres of $F_{1}$ are the right cosets of $H_{a}(\U)$.  Hence the fibres of $F$ are the right cosets of $bH_{a}(\U)b^{-1}$. By Lemma 5.7 (iii), $bH_{a}b^{-1} = H^{+}$.  Taking $\d_t$- Kolchin closures, we obtain that $bH_{a}(\U)b^{-1} = H$, as required. 
\newline
(iii)  Let $a\in {\mathcal O}(K^{\d_x})$, and let $b\in {\mathcal Y}_{a}$.  From the proof of (ii), $X_{a} = F^{-1}(a) = Hb$ which is the $\d_t$-Kolchin closure of $H^{+}b = {\mathcal Y}_{a}$ (see Lemma 5.7 (iii)). So $X_{a}$ is the Kolchin closure of ${\mathcal Y}_{a}$. Now all elements of ${\mathcal Y}_{a}$ have the same type over $K$ (in $(\U,\Pi)$), see Fact 5.4, which implies that $X_{a}$ is $K$-irreducible, as a $\d_t$-algebraic variety (over $K$).  Hence $b$ is the generic point of $X_{a}$ over $K$ as a $\d_t$-variety, and the $\d_t$-function field of $X_{a}$ over $K$ is precisely $K\langle b\rangle_{\d_t}$ the $\d_{t}$-field generated by $K$ and $b$. As  $b$ is a solution of \eqref{lineareq}, we see that $K\langle b\rangle_{\d_t} = K\langle b\rangle_{\Pi} =K\langle b\rangle$ (with earlier notation). 
\newline
(iv)  In the $L_{\Pi,K}$-structure $(\U^{\d_x}, {\mathcal Y})$ the following holds: for all $a_{1},a_{2}\in {\mathcal O}$, $Mor(a_{1},a_{2}) = \{b_{1}^{-1}b_{2}: b_{1}\in f^{-1}(a_{1}), b_{2}\in f^{-1}(a_{2})\}$.  Applying the interpretation $\omega$, Fact 5.4, and parts (i) and (ii) gives the required conclusion. 

\end{proof}

\vspace{2mm}
\noindent
We now finish the proof of Theorem 5.1.  We are assuming that $(K^{\d_x}, \d_{t})$ is existentially closed in $(K,\d_t)$, that  $K^{\d_x}$ is bounded as a field, and that $(K^{\d_x}, \d_t)$ is differentially large (in the sense of \cite{LSTressl2018}).  First:

\begin{lemma} There is $a\in {\mathcal O}(K^{\d_x})$ such that the $\d_t$-variety $X_{a}$ has a point in some elementary extension $(K_{1},\d_t)$ of $(K^{\d_x}, \d_t)$ which contains (extends) $(K, \d_t)$.
\end{lemma}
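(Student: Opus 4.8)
The plan is to adapt the proof of step (v) from the one-derivation case, carrying the descent through the interpretation $\omega$. First I would fix the ambient field $K_1$: since $(K^{\d_x},\d_t)$ is existentially closed in $(K,\d_t)$, I embed $(K,\d_t)$ over $(K^{\d_x},\d_t)$ into a sufficiently saturated $\d_t$-elementary extension $(K_1,\d_t)\succeq(K^{\d_x},\d_t)$, so I may assume $K\subseteq K_1$; differential largeness being first-order, $(K_1,\d_t)$ is again differentially large. By Lemma 5.11 the assertion is then equivalent to finding a $\d_x$-constant point $a\in\mathcal O(K^{\d_x})$ in the image $F(GL(n,K_1))$ --- i.e.\ a constant point in the distinguished $GL(n,K_1)$-orbit on $\mathcal O(\U)=GL(n,\U)/H$, namely the orbit $\{a:X_a(K_1)\neq\emptyset\}$ of the base point (using triviality of $H^1_{\d_t}$ for $GL_n$).

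The evident candidate is $a_0=F(I)$: it is defined over $K$ and $I\in X_{a_0}(K)$, so $X_{a_0}(K_1)\neq\emptyset$, but $a_0$ need not be $\d_x$-constant. To replace it by a constant point I use that the predicate ``$X_a(K_1)\neq\emptyset$'' is invariant under the Galois groupoid $\mathcal G$: by Lemmas 5.8 and 5.11(iv) the family $Mor$ is defined over $K^{\d_x}$, and a $K_1$-rational morphism transports a $K_1$-point of one fibre to the next, so this predicate is constant on the connected components of $\mathcal G(K^{\d_x})$. By Corollary 5.5 (existence, using $\preceq_\exists$) these components are present, and by Lemma 5.10, i.e.\ Theorem~\ref{mainfinite} together with boundedness of $K^{\d_x}$, there are only finitely many of them. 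It therefore suffices to exhibit one component meeting the base-point orbit.

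To locate such a component I would run the existence-theorem descent not naively in $(\U,\d_t)$ but through the interpretation $\omega$ of \cite{LSN} of the two-sorted structure $(\U^{\d_x},\mathcal Y)$, where the $\d_x$-constant locus is visible. There ``there is a solution $y\in\mathcal Y$ with $f(y)$ in the base field'' is the correct existential statement --- its value landing in $\mathcal O(K^{\d_x})$ produces a PPV extension by Fact 5.4, and the associated fibre inherits the rational configuration --- and $(K^{\d_x},\d_t)\preceq_\exists(K,\d_t)$ applies to it. Differential largeness of $K^{\d_x}$ then supplies the density of $K^{\d_x}$-points on the constant Galois-group torsors $Mor$ needed to realize this configuration in one of the finitely many components, exactly the role largeness plays again in the concluding function-field step.

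The main obstacle, and the delicate point the authors signal, is the mismatch of fields of definition: the quotient map $F$, the group $H$, and the $GL_n$-action on $\mathcal O(\U)$ are defined only over $K$, whereas the objects $\mathcal O$ and the groupoid $\mathcal G$ live over the constants $K^{\d_x}$. This blocks any direct appeal to elementarity $K^{\d_x}\preceq K_1$ or to $\preceq_\exists$ for the orbit condition, and it renders the obvious witness $a_0$ useless --- it sits in the base-point orbit but over $K$, and transport along it is vacuous. The crux is thus to show that the trivial class in $H^1_{\d_t}(K_1,H)$ is represented by a genuinely $\d_x$-constant point of $\mathcal O$; reconciling the two descriptions of $\mathcal O(\U)$, over $K$ as $GL(n,\U)/H$ and over $K^{\d_x}$ as groupoid objects, so that triviality witnessed over $K$ descends to $K^{\d_x}$, is where the interpretation $\omega$, the finiteness from Theorem~\ref{mainfinite}, and differential largeness must be combined with care.
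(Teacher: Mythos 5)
You have assembled the correct ingredients --- the base point $a_0=F(e)\in{\mathcal O}(K)$ with $e\in X_{a_0}(K_1)$, the finiteness of the set of connected components of ${\mathcal G}(K^{\d_x})$ from Lemma 5.10, and transport of $K_1$-points along $K_1$-rational morphisms --- but the decisive step is missing, and in your last paragraph you explicitly (and wrongly) rule out the mechanism that supplies it. The paper's proof is a short elementarity transfer: pick representatives $a_1,\dots,a_m\in{\mathcal O}(K^{\d_x})$ of the finitely many components of ${\mathcal G}(K^{\d_x})$. Since ${\mathcal G}$ is quantifier-free definable over $K^{\d_x}$ in the $\d_t$-field $(\U^{\d_x},\d_t)$ (Lemma 5.8), the sentence with parameters in $K^{\d_x}$ asserting ``every $a\in{\mathcal O}$ admits a morphism from some $a_i$'' holds in $(K^{\d_x},\d_t)$ and hence, by $(K^{\d_x},\d_t)\preceq(K_1,\d_t)$, in $(K_1,\d_t)$. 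Your objection that $F$, $H$ and the $GL_n$-action are defined only over $K$, so that elementarity is ``blocked'' for the orbit condition, is misplaced: the point $a_0$ has coordinates in $K\subseteq K_1$ and satisfies the quantifier-free $K^{\d_x}$-formula defining ${\mathcal O}$, so $a_0\in{\mathcal O}(K_1)$ is a legitimate object of ${\mathcal G}(K_1)$, and the transferred sentence applied to it yields $a=a_i\in{\mathcal O}(K^{\d_x})$ with $Mor(a,a_0)(K_1)\neq\emptyset$. Right multiplication by the inverse of such a morphism (Lemma 5.7(ii), Lemma 5.11(iv)) carries $e\in X_{a_0}(K_1)$ to a $K_1$-point of $X_a$, which is exactly the lemma.

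What you propose in place of this --- re-running the existence descent through the interpretation $\omega$ and invoking differential largeness to supply ``density of $K^{\d_x}$-points on the constant Galois-group torsors $Mor$'' --- does not close the gap: $a_0$ need not lie in ${\mathcal O}(K^{\d_x})$, so $Mor(a,a_0)$ is not a torsor defined over $K^{\d_x}$ and there is no constant torsor to which largeness could be applied. In this lemma differential largeness enters only through Lemma 5.10 (i.e.\ via Theorem~\ref{mainfinite}); the largeness-as-field argument you allude to belongs to the later function-field step of Theorem 5.1, not here. Two smaller points: your appeal to triviality of $H^1_{\d_t}$ of $GL_n$ is unnecessary, since $X_a(K_1)\neq\emptyset$ iff $a\in F(GL(n,K_1))$ is immediate from $X_a=F^{-1}(a)$; and Corollary 5.5 gives only ${\mathcal O}(K^{\d_x})\neq\emptyset$, not that any component meets the base-point orbit. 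Your final paragraph correctly isolates the crux but leaves it unresolved, and for this lemma that resolution is the whole content.
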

\begin{proof} By Corollary 5.5, ${\mathcal O}(K^{\d_x})$ is nonempty, giving rise to a PPV extension of $K$, but we need more. 
Considering $F:GL(n,\U) \to {\mathcal O}(\U)$,  let $a' = F(e)$  where $e$ is the identity of $GL(n,\U)$,  So $a'\in {\mathcal O}(K)$.  By 5.11 (ii), $X_{a'} = H$. As $(K^{\d_x}, \d_t)$ is e.c. in $(K,\d_t)$ there is, on general grounds, an elementary extension $(K_{1},\d_t)$ of $(K^{\d_x},\d_t)$  which contains $(K,\d_t)$, and we may assume that $(K_{1},\d_t)$ is also a differential subfield of $(\U,\d_t)$.  Now ${\mathcal G}(K^{\d_x})$ has only finitely many connected components, by Lemma 5.10.  Hence, as $(K_{1},\d_t)$ is an elementary extension of $(K^{\d_x},\d_t)$, there is $a\in {\mathcal O}(K^{\d_x})$ such that $Mor(a,a')(K_{1})$ is nonempty. Now $X_{a'}(K_{1})$ is nonempty as it contains the identity of $H$. Hence $X_{a}(K_{1})$ is nonempty, finishing the proof of the lemma.
\end{proof}

Let $a$ and $K_{1}$ be as in Lemma 5.12.  $X_{a}$ is a right coset of $H$, which is $K$-irreducible (as a $\d_t$-algebraic variety over $K$).  Let $Z$ be a $K_{1}$-irreducible component of $X_{a}$ which has a $K_{1}$-point. Then $Z$ is a coset of  the connected component $H^{0}$ of the $\d_t$-algebraic group $H$. 
Consider the $\d_t$-function field $K_{1}\langle Z\rangle_{\d_t}$.  Now general model-theoretic considerations imply that this $K_{1}\langle Z\rangle_{\d_t}$ is the function field of a coset $C$ of a proalgebraic group over $K_{1}$ such that $C$ has a $K_{1}$-rational point. As $K^{\d_x}$ is large as a field, so is $K_{1}$, whereby $K_{1}$ is existentially closed in $K_{1}\langle Z\rangle_{\d_t}$ as fields.  As $(K^{\d_x}, \d_t)$ is differentially large, so is  $(K_{1},\d_t)$ which implies that $K_{1}$ is existentially closed in $K_{1}\langle Z\rangle_{\d_t}$, as $\d_t$-fields.  As $K^{\d_x}$ is an elementary substructure of $K_{1}$, as $\d_t$-fields, it follows that $K^{\d_x}$ is existentially closed in $K_{1}\langle Z\rangle_{\d_t}$ as $\d_t$-fields. 
Finally, as the embedding of $K$ in $K_{1}$ extends to an embedding of $K\langle X_{a}\rangle_{\d_t}$ in $K_{1}\langle Z\rangle_{\d_t}$ as $\d_t$-fields, it follows that $K^{\d_x}$ is existentially closed in   $K\langle X_{a}\rangle_{\d_t}$ as $\d_t$-fields. As we know from Lemma 5.11 (iii) that $K\langle X_{a}\rangle_{\d_t}$ is a PPV extension of $K$ for \eqref{lineareq}, we are finished with the proof of Theorem 5.1. 

\vspace{5mm}
\noindent
As mentioned in Remark 2.9 (2), Theorem 5.1 applies to the situation where $T$ is a model complete theory of bounded large fields (in the language of unitary rings), and $(K^{\d_x},\d_t)$ is a model  of the model companion of $T\cup\{\d_t$ is a derivation\} (in the language of differential unitary rings). When $T$ is the theory of real closed fields, this model companion coincides with Singer's theory $CODF$ of closed ordered differential fields \cite{Singer1978} after adding a symbol for the unique ordering. 

\begin{corollary} Suppose $(K,\d_x,\d_t)$ is a field with commuting derivations $\d_x, \d_t$. Suppose that $K$ is formally real, and that $(K^{\d_x},\d_t)$ is a ``closed ordered differential field" (i.e. a model of CODF). Then, for any parameterized linear DE $\d_x(Z) = AZ$ over $K$ (as in \eqref{lineareq}) there is a PPV extension $(L,\d_x,\d_t)$ for the equation such that $L$ is formally real. 
\end{corollary}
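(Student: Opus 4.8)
The plan is to derive this corollary directly from Theorem 5.1 by checking its three hypotheses in the present setting and then reading off formal reality of $L$ from the existential-closedness conclusion. Since $(K^{\d_x},\d_t)$ is a model of CODF, its field reduct $K^{\d_x}$ is real closed. Two of the hypotheses of Theorem 5.1 are then essentially automatic: a real closed field is bounded (its absolute Galois group has order $2$, so there are only finitely many algebraic extensions of each degree), and, applying Remark 2.9 (2) with $T$ the model-complete theory $RCF$ of (large) real closed fields, the model companion of $T\cup\{\d_t \text{ is a derivation}\}$ is CODF, so $(K^{\d_x},\d_t)$ is differentially large.

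The delicate hypothesis to verify is that $(K^{\d_x},\d_t)$ is existentially closed in $(K,\d_t)$ as $\d_t$-fields, and this is precisely where formal reality of $K$ is used. First I would fix an ordering on the formally real field $K$ via Artin--Schreier; because $K^{\d_x}$ is real closed it carries a \emph{unique} ordering, so the chosen ordering on $K$ necessarily restricts to it, making $(K,\d_t,<)$ an ordered differential field extending $(K^{\d_x},\d_t,<)$. As CODF is the model completion of the theory of ordered differential fields, I would embed $(K,\d_t,<)$ into a model $(M,\d_t,<)\models CODF$. Now $(K^{\d_x},\d_t,<)$ and $(M,\d_t,<)$ are both models of the model-complete theory CODF, with the former a substructure of the latter, so $(K^{\d_x},\d_t,<)\preceq(M,\d_t,<)$. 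Forgetting the order, every existential $\d_t$-formula is a fragment of the ordered $\d_t$-language, so $(K^{\d_x},\d_t)$ is existentially closed in $(M,\d_t)$; and since $(K,\d_t)$ sits between them, $(K^{\d_x},\d_t)$ is existentially closed in $(K,\d_t)$ as well. I expect this passage between the ordered language (where CODF supplies elementarity) and the plain $\d_t$-language (in which Theorem 5.1 is phrased) to be the main obstacle, and it is exactly the step that forces the formal reality hypothesis on $K$.

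With all three hypotheses in place, Theorem 5.1 yields a PPV extension $(L,\d_x,\d_t)$ of $(K,\d_x,\d_t)$ for the equation such that $(K^{\d_x},\d_t)$ is existentially closed in $(L,\d_t)$ as $\d_t$-fields. Finally, to see that $L$ is formally real I would argue by contradiction: if $L$ were not formally real, then $-1=\sum_{i=1}^{m}a_i^2$ for some $a_i\in L$ and some $m$, that is, the existential field sentence $\exists x_1\dots\exists x_m\,(\sum_{i=1}^{m} x_i^2 = -1)$ holds in $L$. This is in particular an existential $\d_t$-sentence over $K^{\d_x}$, so by existential closedness it would hold in $K^{\d_x}$, contradicting that the real closed field $K^{\d_x}$ is formally real. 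Hence $L$ is formally real, and the corollary follows.
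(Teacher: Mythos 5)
Your proposal is correct and takes essentially the same route as the paper: verify the three hypotheses of Theorem 5.1 (boundedness and differential largeness of $K^{\d_x}$ via real closedness and Remark 2.9(2), and existential closedness of $(K^{\d_x},\d_t)$ in $(K,\d_t)$ from formal reality of $K$ together with CODF being the model completion of ordered differential fields), then deduce formal reality of $L$ from the existential-closedness conclusion exactly as the paper does. The only difference is that you spell out the ordering/model-completeness argument for the existential-closedness step, which the paper asserts in one line; your filled-in details are sound.
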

\begin{proof} 
As $(K,\d_t)$ is a formally real differential field and $(K^{\d_x},\d_t)$ is a model of CODF, we see that $(K^{\d_x},\d_t)$ is existentially closed in $(K,\d_t)$ as $\d_t$-fields.
Apply Theorem 5.1 to find a PPV extension $(L,\d_x,\d_t)$ such that $(K^{\d_x},\d_t)$ is existentially closed in $(L,\d_t)$. It follows that $L$ must be formally real. 
\end{proof}

Another example is when the theory $T$ is the theory of $p$-adically closed fields of rank $d$. In this case the model companion of $T\cup \{\d_t$ is a derivation\} is the theory of closed $p$-adic differential fields of rank $d$ introduced by Tressl in \cite{Tressl}. Similar to the above corollary we have:

\begin{corollary} Suppose $(K,\d_x,\d_t)$ is a field with commuting derivations $\d_x, \d_t$. Suppose that $K$ is a $p$-adic field of rank $d$, and that $(K^{\d_x},\d_t)$ is a ``closed $p$-adic differential field of rank $d$". Then, for any parameterized linear DE $\d_x(Z) = AZ$ over $K$ (as in \eqref{lineareq}) there is a PPV extension $(L,\d_x,\d_t)$ for the equation such that $L$ is a $p$-adic field of rank $d$. 
\end{corollary}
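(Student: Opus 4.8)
The plan is to follow the proof of the preceding corollary (the CODF case) almost verbatim, replacing the role of ``formally real'' by ``formally $p$-adic of rank $d$'' and the sum-of-squares criterion by the Kochen operator. Concretely, I would first verify that the three hypotheses of Theorem 5.1 hold for $(K^{\d_x},\d_t)$, then invoke that theorem, and finally transfer the $p$-adic property back to $L$ by existential closedness.

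For the existential closedness hypothesis: since $K$ is a $p$-adic field of rank $d$ it carries a $p$-valuation, and I would choose one restricting to the ($p$-adically closed, hence essentially canonical) valuation of $K^{\d_x}$, so that $(K,\d_t)$ becomes a $p$-valued differential field of rank $d$ extending $(K^{\d_x},\d_t)$. As $(K^{\d_x},\d_t)$ is a closed $p$-adic differential field of rank $d$, i.e. a model of the model companion of $T\cup\{\d_t\text{ is a derivation}\}$ with $T$ the theory of $p$-adically closed fields of rank $d$, it is existentially closed in any such extension; in particular $(K^{\d_x},\d_t)$ is existentially closed in $(K,\d_t)$ as $\d_t$-fields. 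The remaining two hypotheses concern $K^{\d_x}$ alone: $p$-adically closed fields of rank $d$ are bounded as fields (being elementarily equivalent to finite extensions of $\mathbb{Q}_p$, which have only finitely many extensions of each degree), and by Remark 2.9(2) every model of the model companion is differentially large. Thus Theorem 5.1 applies and produces a PPV extension $(L,\d_x,\d_t)$ of $K$ for \eqref{lineareq} such that $(K^{\d_x},\d_t)$ is existentially closed in $(L,\d_t)$ as $\d_t$-fields; forgetting $\d_t$, $K^{\d_x}$ is existentially closed in $L$ as fields.

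It remains to deduce that $L$ is a $p$-adic field of rank $d$, and this is the only point where the $p$-adic argument genuinely diverges from the formally real one. In the formally real case ``$-1$ is a sum of squares'' is itself existential, so its failure in the real closed field $K^{\d_x}$ passes up to $L$; here I would invoke instead the Prestel--Roquette theory of formally $p$-adic fields, whereby the negation of ``formally $p$-adic of rank $d$'' is equivalent to an existential (diophantine) condition with parameters in $K^{\d_x}$, expressed through the Kochen operator (representability of $1/p$ in the Kochen ring). Since $K^{\d_x}$ is $p$-adically closed of rank $d$, hence formally $p$-adic of rank $d$, this existential condition fails in $K^{\d_x}$; as $K^{\d_x}$ is existentially closed in $L$ as fields, it therefore also fails in $L$, so $L$ is formally $p$-adic of rank $d$, i.e. a $p$-adic field of rank $d$.

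I expect the main obstacle to be precisely this last step: producing the correct existential (Kochen-operator) characterization of ``formally $p$-adic of rank $d$'' and matching the notion of ``rank'' in the Kochen theory with the ``rank $d$'' of the theory $T$ and of the model companion. A secondary delicate point is the compatibility of $p$-valuations in the first paragraph, namely ensuring that the $p$-valuation chosen on $K$ genuinely restricts to that of the $p$-adically closed field $K^{\d_x}$ while retaining rank $d$; this is the $p$-adic analogue of the (automatic) fact that the unique ordering of a real closed field extends to any formally real extension.
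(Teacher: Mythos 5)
Your proposal is correct and takes essentially the same approach as the paper: the paper offers no separate argument for this corollary, remarking only that it is ``similar to the above corollary'', i.e.\ one checks the three hypotheses of Theorem 5.1 for $(K^{\d_x},\d_t)$ (existential closedness in $(K,\d_t)$, boundedness of $p$-adically closed fields, and differential largeness of models of Tressl's model companion via Remark 2.9(2)) and then pulls the formally $p$-adic (rank $d$) property of $K^{\d_x}$ up to $L$ using that $K^{\d_x}$ is existentially closed in $L$ as fields. Your explicit appeal to the Prestel--Roquette/Kochen-operator existential characterization of failing to be formally $p$-adic of rank $d$, and your attention to compatibility of $p$-valuations, merely spell out the details the paper leaves implicit.
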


These two corollaries are parameterized versions of the main result of \cite{CHvdP}.

\bibliographystyle{plain}

\end{document}